\newcommand{\ubar}[1]{\underaccent{\bar}{#1}}
\newcommand{\R}{\mathbb{R}}
\newcommand{\N}{\mathbb{N}}
\newcommand{\C}{\mathcal{C}}
\newcommand{\I}{\mathcal{I}}
\newcommand{\M}{\mathcal{M}}
\newcommand{\Lambd}{\Gamma}
\newcommand{\lambd}{\gamma}
\theoremstyle{plain}
\newtheorem{defi}{Definition}[section]
\newtheorem{prop}[defi]{Proposition}
\newtheorem{teo}[defi]{Theorem}
\newtheorem{lema}[defi]{Lemma}
\newtheorem{remark}[defi]{Remark}
\theoremstyle{definition}
\theoremstyle{remark}
\numberwithin{equation}{section}
\begin{document}

\title[]{On large solutions for fractional Hamilton-Jacobi equations}

\author[]{Gonzalo D\'avila}
\address{
Gonzalo D\'avila: Departamento de Matem\'atica, Universidad T\'ecnica Federico Santa Mar\'ia \\
Casilla: v-110, Avda. Espa\~na 1680, Valpara\'iso, Chile
}
\email{gonzalo.davila@usm.cl}

\author[]{Alexander Quaas}
\address{
Alexander Quaas: Departamento de Matem\'atica, Universidad T\'ecnica Federico Santa Mar\'ia \\
Casilla: v-110, Avda. Espa\~na 1680, Valpara\'iso, Chile}
\email{alexander.quaas@usm.cl}

\author[]{Erwin Topp}
\address{
Erwin Topp: Departamento de Matem\'atica y C.C., Universidad de Santiago de Chile,
Casilla 307, Santiago, Chile.
}
\email{erwin.topp@usach.cl}

\keywords{Nonlocal operator, Hamilton-Jacobi Equations, Dirichlet Problem, Large Solutions, Viscosity Solutions}

\subjclass[2020]{35F21, 35R11, 35B44, 35B40, 35D40}

\date{\today}

\begin{abstract}
We study the existence of large solutions for nonlocal Dirichlet problems posed on a bounded, smooth domain, associated to fully nonlinear elliptic  equations of order $2s$, with $s\in  (1/2,1)$, and a coercive gradient term with subcritical power $0<p<2s$. 

Due to the nonlocal nature of the diffusion, new blow-up phenomena arise within the range $0<p<2s$, involving a continuum family of solutions and/or solutions blowing-up to $-\infty$ on the boundary. This is in striking difference with the local case studied by Lasry-Lions for the case subquadratic case $1<p<2$.
\end{abstract}

\maketitle

\section{Introduction.}

In this paper we study the boundary blow-up phenomena for solutions of fractional problems with  coercive gradient with the form
\begin{equation}\label{eqLL}
(-\Delta)^s u + |Du|^p + \lambda u  =f  \quad \mbox{in} \ \Omega,
\end{equation}
where $s \in (1/2, 1)$, $0 < p < 2s, \Omega \subset \R^N$ is a bounded, open set with smooth boundary, $f \in C(\Omega)$ and $\lambda \in \R$. Here $(-\Delta)^{s}$ denotes the fractional Laplacian of order $2s$, defined for smooth functions $u:\R^N\to\R$ as
\begin{equation}\label{fracLapl}
(-\Delta)^s u(x) = {C_{N, s} \mathrm{P.V.} \int_{\R^N} \frac{u(x) - u(z)}{|x - z|^{N + 2s}}dz},
\end{equation}
whenever the integral converges. Here, the principal value  is understood in the \textsl{Cauchy principal value sense}. The constant $C_{N, s} > 0$ is a normalizing constant so that $(-\Delta)^{s}u\to -\Delta u$ as $s\to1$ (in an adequate functional framework, see for example~\cite{Hitch}). 

The question we address concerns the existence of solutions to \eqref{eqLL} that become unbounded near the boundary. We prove the existence of multiple solutions to this problem under certain assumptions on the data. 

Our method relies on the construction of solutions by Perron's method, as it is presented by Lasry and Lions in~\cite{LL}, that is
\begin{equation}\label{eqLLloc}
-\Delta u + |Du|^p + \lambda u = f \quad \mbox{in} \ \Omega,
\end{equation}
where $\Delta$ denotes the usual Laplacian, $1 < p \leq 2$, $\lambda > 0$. In this local setting, the Dirichlet problem with blow-up boundary condition associated to~\eqref{eqLLloc} is complemented by the expression
\begin{equation}\label{blow+}
\lim_{x \in \Omega, \ x \to \partial \Omega} u(x) = +\infty.
\end{equation}

It is proven in~\cite{LL} the existence of a unique large solution for the Dirichlet problem~\eqref{eqLLloc}-\eqref{blow+} for $f \in L^\infty_{loc}(\Omega)$ with an appropriate growth on the boundary. Condition $p \leq 2$ is typically referred as the \textsl{subcritical case}.

Due to the nonlocal nature of the fractional diffusion~\eqref{fracLapl}, 
the Dirichlet problem associated to~\eqref{eqLL} requires we impose a condition on $\Omega^c$. We consider here the Dirichlet condition $u = \varphi$ in $\Omega^c$,
where $\varphi: \Omega^c \to \R$ is a given function that satisfies the integrability condition $\varphi \in L^1_w(\Omega^c)$, where for measurable set $E \subset \R^N$ we denote
$$
L^1_\omega(E) := \{ u \in L^1_{loc}(\R^N) : \int_{E} |u| \omega < +\infty\}, \quad \omega(y) := \frac{1}{(1 + |y|)^{N + 2s}}.
 $$

Summarizing, a first model for Dirichlet problem we consider here takes the form
\begin{align}\label{Dirichlet1}
\left\{\begin{array}{rll}
(-\Delta)^s u+|D u|^p + \lambda u & =f & \mbox{in} \ \Omega,\\
u & =\varphi &  \mbox{in} \ \Omega^c,\\
\lim\limits_{x\in\Omega, \ x\to\partial\Omega}u & =+\infty. &
\end{array}\right.
\end{align}

The study of large solutions has a long history starting with the work of Keller and Osserman (see \cite{Keller} and \cite{Osserman}), where they studied conditions on nonlinearities $f$ in order to find unbounded solutions to
\[
\Delta u=f(u) \quad \text{in }\Omega.
\]
For recent developments and extensions see \cite{AQGJDE}, where a thorough list of references can be found. Existence of blow-up solutions for nonlocal Dirichlet problems has been studied in~\cite{CV, CFQ, aba, chen, chen2}. Of particular interest is the work of Abatangelo~\cite{aba}, where the existence of an intriguing variety of blow-up solutions, for an ample class of reaction-diffusion problems with non-homogeneous exterior data is addressed. Roughly speaking, he constructs a fractional harmonic function that blows up near the boundary with a rate given by $d^{s-1}$ (here $d= d(x)$ denotes the distance function $\mathrm{dist}(x, \partial \Omega)$ for $x \in \Omega$). These results rely on Green functions and integral formulas for the fractional Laplacian. Using similar tools, existence of weak solutions for quasilinear equations with measure ingredients was already treated by Chen and Veron in~\cite{CV}. We mention here that all the mentioned results deal with linear diffusion.

Here we understand the condition $p < 2s$ as \textsl{strictly subcritical} in the sense that the growth of the gradient is strictly less than the order of the diffusion. Our result shows a multiplicity phenomena which is in high contrast with the second-order setting, and it is a consequence of the nonlocal nature of the problem. 
For instance, if we look on the existence result in~\cite{LL}, the authors construct a Perron's solution through blow-up barriers. These are suitable powers of the distance function $d^\beta$. By a natural scaling property of the equation, the exponent $\beta = \frac{p - 2}{p - 1}$ ensures a good approximation for the problem. A logarithmic profile is found in the critical case $p=2$.

We follow the same program here to construct solutions. For introductory purposes, we concentrate on the case $u$ satisfies the homogeneous exterior condition $u = 0$ in $\Omega^c$. Extending $d$ as zero outside $\Omega$ and for $p \in (1,2s)$, we have the corresponding exponent 
\begin{equation}\label{beta}
\beta = \frac{2s - p}{1 - p},
\end{equation}
as the one making $d^\gamma$ a good ansatz for the problem~\eqref{Dirichlet1}. Since we require $d^\gamma$ to be integrable, this introduces new critical exponents of $p$, depending on the diffusive parameter $s$. By the method used to find it, we refer to this solution as a \textsl{scale} solution.
Nevertheless, the existence of blow-up fractional harmonic functions involves the existence of a continuum of solutions which are not present in the local framework. Moreover, the nonlocal phenomena also permits the existence of blow-up solutions to $-\infty$ in certain regimes of $p < 2s$. We will come back to more specific aspects of the problem later.

Next we describe the general class of operators we will consider. For $s \in (1/2, 1)$ and constants $0 < \lambd \leq \Lambd < +\infty$, we consider the class $\mathcal K$ of measurable kernels $K : \R^N \setminus \{ 0\} \to \R$ such that $K(y) = K(-y)$ for all $y$ and satisfying the ellipticity condition
\begin{equation}\label{ellipticity}
\gamma |y|^{-(n + 2s)} \leq K(y) \leq \Gamma |y|^{-(N + 2s)}, \quad y \neq 0.
\end{equation}


For each $K \in \mathcal K$, we consider the linear operator
\begin{equation}\label{lineal}
L_K u(x) := \mathrm{PV} \int_{\R^n} [u(x + y)-u(x)] K(y) dy, 	
\end{equation}
which is well defined for measurable $u: \R^N \to \R$ satisfying adequate regularity assumptions on $x$ and weighted integrability condition at infinity; typically $u \in C^{1,1}$ in a neighborhood of $x$ and $u \in L^1_\omega(\R^N)$. 
Thus, for a  two-parameter family of kernels $\{ K_{ij}\}_{i \in I, j \in J} \subset \mathcal K$, and denoting $L_{ij} := L_{K_{ij}}$ we write 
\begin{align} \label{opp}
	 \I u(x):= \inf\limits_{i\in I} \sup\limits_{j \in J} L_{ij}u (x).
\end{align}

Associated to $\I$ of this type, we consider 
\begin{equation}\label{laoo}
\lambda_0(\I) = \inf_{x \in \Omega, i \in I, j \in J} \int_{\Omega^c} K_{ij}(x - y) dy.
\end{equation}

Notice that $0 < \lambda_0(\I) < +\infty$.

We focus on the fully homogeneous class of kernels with the form
\begin{equation}\label{Kestable}
K(z) = \frac{a(z/|z|)}{|z|^{N + 2s}},
\end{equation}
for some nonnegative, measurable function $a: S^{N - 1} \to \R$. In this setting, condition~\eqref{ellipticity} turns out to be $\gamma \leq a \leq \Gamma$.

In order to describe our existence results we require to introduce two exponents related to the gradient nonlinearity, depending on the order $s$.  We write $p_i = p_i(s)$ for $i = 1, 2$ as
\begin{equation}\label{ppp}
p_1= s + \frac{1}{2} \quad\mbox{and} \quad p_2=\frac{s+1}{2-s},
\end{equation}

A third exponent $p_0=\frac{2s}{2-s}$ also emerges in our analysis, but this plays a less important role. 
We notice that for $s \in (1/2, 1)$, we have $p_1 > 1$ and
$$
p_1(s)<p_2(s)< 2s, \quad p_2(1^-)=2, \quad p_1(1/2^+)=p_2(1/2^+)=1,
$$



\medskip

Thus, our main results is the existence of boundary blow-up solutions for the Dirichlet problem
\begin{equation}\label{eq}\tag{{\bf P}}
\left \{ \begin{array}{rl} - \I (u) + |Du|^p + \lambda u = f \quad & \mbox{in} \ \Omega, \\
u  = 0 \quad & \mbox{in} \ \Omega^c,  
\end{array} \right .
\end{equation}
as it described by the following.
\begin{teo}\label{teofbdd} Let $s \in (1/2,1)$, $0 < p < 2s$, $\Omega \subset \R^N$ be a bounded domain with $C^2$ boundary, $f\in L^\infty(\Omega) \cap C(\Omega)$.
When $1 < p < 2s$, let $\beta$ as in~\eqref{beta}.

Let $\mathcal K$ be a family of symmetric kernels satisfying \eqref{ellipticity} and~\eqref{Kestable}, $\I$ a nonlinear operator with the form~\eqref{opp}, and $\lambda > -\lambda_0(\I)$. Let $p_i$ be defined as in~\eqref{ppp}, $i=0,1,2$.

Then, we have the following existence results:

\medskip
\noindent
\textsl{1.- One parameter family of solutions (close to $s$-harmonic):} If $0 < p < p_2$, there exists $\sigma > 0$ and a family of solutions $\{ u_t \}_{t \in \R, t \neq 0} \subset C^{\sigma}(\Omega)$ to \eqref{eq}, such that for each $t$ we have
$$
d^{1 - s} u_t(x) - t = O(d^{\gamma}),  
$$
for some $\gamma > 0$ depending on $p$. In particular, if $t_1 < t_2$, then
$$
u_{t_1} < u_{t_2} \quad \mbox{in} \ \Omega.
$$

Moreover, if $p$ additionally satisfies $p < p_0$, then we can take $\gamma>0$ .

\medskip
\noindent
\textsl{2.- Positive scale solution:} If $p_1 < p < p_2$, then there exists $\sigma > 0$ and a constant $T > 0$ and a function $u \in C^{\sigma}(\Omega)$ solving~\eqref{eq} such that
$$
d(x)^{-\beta}u(x) - T = O(d(x)^\gamma),
$$ 
for some $\gamma > 0$.
\medskip

\noindent 
\textsl{3.- Negative scale solution:} For $p_2 < p < 2s$, then there exist $\sigma > 0$, $T > 0$ and a solution $u \in C^{\sigma}(\Omega)$ of \eqref{eq} such that
$$
d^{-\beta}(x)u(x) + T = O(d(x)^\gamma), 
$$
for some $\gamma > 0$.

%
\end{teo}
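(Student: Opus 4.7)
The plan is Perron's method, following the template of~\cite{LL} but with two distinct candidate boundary profiles. The first is a \emph{fractional-harmonic} profile $\psi_s$, normalized so that $\psi_s \sim d^{s-1}$ near $\partial\Omega$. The second is the \emph{scale} profile $\varphi_\beta(x) := d(x)^\beta$ with $\beta$ as in~\eqref{beta}. Which profile dominates near $\partial\Omega$ is governed by the sign of $\beta-(s-1)$, and the thresholds $p_1, p_2$ in~\eqref{ppp} arise naturally: $p_1$ is the exponent where $\beta = -1$ (so $\varphi_\beta \in L^1_\omega$ precisely for $p > p_1$), while $p_2$ is where the two profiles coincide (equivalently, where the gradient of $\psi_s$ is no longer subordinate to the diffusion).

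For each claim I would build ordered sub- and super-solutions of the form \emph{ansatz} $\pm$ \emph{correction}. In part~1 take $u_t^\pm(x) := t\,\psi_s(x) \pm C\,d(x)^{s-1+\gamma}$ for small $\gamma>0$ and large $C>0$. Near the boundary one has $-\I(t\psi_s) = O(d^{-s-1})$ arising from the deviation of $\psi_s$ from the pure power $d^{s-1}$, and $|D(t\psi_s)|^p = O(d^{(s-2)p})$; the hypothesis $p < p_2$ is precisely the inequality $(s-2)p > -s-1$, which makes the gradient term strictly lower order and absorbable by the correction. For parts~2 and~3 the ansatz is $u^\pm(x) := T\,d(x)^\beta \pm C\,d(x)^{\beta+\gamma}$; balancing the two leading singular terms produces a scalar equation
\[
-c_\I(\beta)\,T + |\beta|^p\,|T|^{p-1}T = 0,
\]
where $c_\I(\beta)$ is the half-space boundary coefficient of $-\I\varphi_\beta$. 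The sign of $c_\I(\beta)$ switches at $\beta = s-1$, i.e.\ at $p = p_2$, yielding a positive amplitude $T$ when $p_1 < p < p_2$ and a negative one when $p_2 < p < 2s$. With these barriers, Perron's method produces a continuous viscosity solution; the interior Hölder regularity $C^\sigma$ on compact subsets of $\Omega$ follows from the standard nonlocal Krylov--Safonov estimates for operators in the class $\mathcal K$, combined with the local $L^\infty$ bound provided by the barriers. The monotonicity $u_{t_1} < u_{t_2}$ in part~1 is obtained from the strong comparison principle applied to the difference, whose rescaled boundary trace $d^{1-s}(u_{t_2}-u_{t_1})$ tends to $t_2 - t_1 > 0$.

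The main difficulty lies in establishing precise asymptotic expansions of $\I \psi_s$ and $\I \varphi_\beta$ in a curved, anisotropic setting: one must flatten $\partial\Omega$ via a $C^2$ diffeomorphism, reduce to a half-space identity of the form $L_K (x_N)_+^\alpha = c(\alpha,K)(x_N)_+^{\alpha - 2s}$, and control the curvature-induced error by a strictly sub-leading power, uniformly over $i \in I$, $j \in J$. For $\psi_s$ this requires a careful choice of fractional-harmonic extension in the spirit of Abatangelo~\cite{aba}, ensuring $\psi_s - t\,d^{s-1}$ is of strictly smaller order. The homogeneity assumption~\eqref{Kestable} is essential here, because it guarantees that the inf-sup coefficient $c_\I(\beta)$ has a definite sign bounded away from zero in terms of $\gamma$ and $\Gamma$, which is what lets the scalar matching equation be solved uniformly across the family. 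Once these expansions are in place, Perron's method and the comparison principle deliver the three parts of the theorem.
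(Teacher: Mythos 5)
Your plan---Perron's method with power-type barriers of the form ``ansatz $\pm$ correction'', supported by an asymptotic expansion of $\I d^\tau$ near $\partial\Omega$ obtained by flattening the boundary and reducing to a half-space identity---is exactly the paper's approach, and your reading of the roles of $p_1$ and $p_2$ is correct. However, two of your estimates, as stated, would break the argument.

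First, the bound $-\I(t\psi_s)=O(d^{-s-1})$ is too weak. Because $c(s-1)=0$, the expansion $\I d^\tau = d^{\tau-2s}\bigl(c(\tau)+O(d^s)\bigr)$ gives $\I d^{s-1}=O(d^{-1})$, a full power $d^s$ better. This is not cosmetic: your correction $\pm C d^{s-1+\gamma}$ contributes a nonlocal term of size $\sim c^{\pm}(s-1+\gamma)\,d^{\gamma-s-1}$ with a definite sign for $0<\gamma<1$, and to absorb an error of order $d^{-s-1}$ you would need $\gamma-s-1<-s-1$, i.e.\ $\gamma<0$, contradicting $\gamma>0$. With the correct $O(d^{-1})$ the requirement becomes $\gamma<s$, which is compatible with the constraint $\gamma<(s-2)p+s+1$ coming from the gradient term; this is precisely where $p<p_2$ is used. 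The same observation shows the detour through a genuine fractional-harmonic profile $\psi_s$ is unnecessary --- the pure power $d^{s-1}$ already has $c(s-1)=0$, and the paper works directly with it.

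Second, the scalar balance $-c_{\I}(\beta)T + |\beta|^p|T|^{p-1}T=0$ is only legitimate when $T>0$, since $\I$ is merely \emph{positively} $1$-homogeneous: $\I(Tu)\neq T\,\I u$ for $T<0$. For $p>p_2$ (so $\beta\in(s-1,0)$ and $c(\beta)<0$) the ansatz must be written $U=-Td^\beta - Cd^\gamma$ with $T>0$, and the coefficient that enters is $\tilde c(\beta)$ associated to the conjugate operator $\tilde{\I}v:=-\I(-v)$, giving the balance $T\,\tilde c(\beta)+T^p|\beta|^p=0$ and $T=(-\tilde c(\beta)|\beta|^p)^{1/(p-1)}>0$. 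As written, your single equation has no solution of the correct sign in Case 3. With these two repairs your proposal coincides with the paper's proof.
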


For the reader who is familiarized with nonlocal problems, condition $\lambda > -\lambda_0(\I)$ is a condition ensuring comparison principle, see for instance~\cite{Topp}. Since we employ an approximation procedure to construct the solutions (c.f. Proposition~\ref{perron-alternativo}), comparison principle plays an important role.

As we previously mentioned, we construct barriers that blow-up (to $+\infty$) as powers of the distance function. This allows us to construct scale solutions behaving like $d^\beta$ with $\beta$ as in~\eqref{beta}. This imposes the condition $p < p_2$ in order for the power function to be integrable. In the regime $p > p_2$, the same argument allows us to get a negative blow-up solution.



Another type of large solutions, not present in the local case, emerges here. For instance in {\it Case 1}, the family $\{ u_t \}$ is constructed ``around" a blow-up, fractional harmonic function. Our key technical result (c.f. Proposition~\ref{lema.cota1}) indicates that the function $d^{s-1}$ is close to being harmonic for $\I$. In {\it Case 3}, we construct barriers perturbing $d^{s-1}$ with other power-type functions of lower order, leading to the existence of solutions that diverge to $-\infty$.

There are plenty of open questions after this work that we believe deserve to be investigated. For instance, we could not cover the natural critical exponents as $p=p_2$ and $p = 2s$. None of the solutions found in Theorem~\ref{teofbdd} converge to a blow-up (or blow-down) solution of~\eqref{eq} when $p \to p_2$. In fact, they surprisingly converge to the unique bounded viscosity solution to~\eqref{eq}, see Remark~\ref{rmk1}. On the other hand, the critical case $p = 2s$ resembles the case $p=2$ in~\eqref{eqLLloc}, for which a logarithmic blow-up profile for the solution is obtained see \cite{LL}. In our context, the construction of barriers with a log profile is hard to handle at a technical level, and therefore we did not pursue it in this work. 

A question about applications that emerges here has to do with the connection of problem~\eqref{eq} with stochastic optimal control problems. In the local setting presented in~\cite{LL}, the solution to~\eqref{eqLLloc} turns out to be the value function of an infinite horizon stochastic optimal control problem with a cost involving $f$ and a “feedback" term depending on $p$. The admissible drifts  are those preventing the trajectories of the stochastic  process to exit the domain, leading to the blow-up of the associated value function.
Here, for each $K$ like in~\eqref{Kestable}, its associated linear operator $L_K$ is the infinitesimal generator of a $s$-stable pure jump L\'evy process, see for instance~\cite{B, S, ROS}. The connection of fractional Hamilton-Jacobi equations and stochastic optimal control problems with jumps have been shown to be hold in some cases, for instance in the unrestricted state case $(\Omega = \R^N)$. The fully nonlinear structure of~\eqref{eq} involves SDE's with controlled random parts, see for instance~\cite{Pham, S}. 


The paper is organized as follows: In Section \ref{secper} we provide the notion of solution we use here and a general Perron's method. In Section \ref{sectech} we provide precise estimates of the nonlocal operator applied to powers of the distance function. In Section \ref{secteo} we provide the proof of Theorem \eqref{teofbdd}, which is accomplished by constructing sub and super solutions based on the results of Section \ref{sectech}. Finally in Section \ref{secext} we provide some extensions that include unbounded right-hand side and non-homogeneous exterior Dirichlet data.

\section{Perron's Solutions}\label{secper}

Given $\varphi \in L^1_\omega(\Omega^c)$, $\lambda > -\lambda_0(\I)$ and $f \in C(\Omega)$, we are interested in viscosity solutions to the problem
\begin{align}\label{eqgeneral}
\left\{\begin{array}{rll}
-\mathcal I u+|D u|^p + \lambda u & =f & \mbox{in} \ \Omega,\\
u & =\varphi &  \mbox{in} \ \Omega^c,\\
\lim\limits_{x\in\Omega, \ x\to\partial\Omega}u & =+\infty, &
\end{array}\right.
\end{align}
and its blow-up version to minus infinity, that is, $\lim\limits_{x\in\Omega, \ x\to\partial\Omega}u = -\infty$.

We start with the notion of viscosity solution, see~\cite{BChI}.
\begin{defi}\label{defsol}
A function $u \in L^1_\omega(\R^N)$, upper semicontinuous in $\Omega$, is a viscosity subsolution to the Dirichlet problem
\begin{align}\label{eqdef}
\left\{\begin{array}{rll}
-\mathcal I u+|D u|^p + \lambda u & =f & \mbox{in} \ \Omega,\\
u & =\varphi &  \mbox{in} \ \Omega^c,
\end{array}\right.
\end{align}
 if $u \leq \varphi$ in $\Omega^c$, and, for every $x_0 \in \Omega$ and every function $\phi \in L^1_\omega(\R^N) \cap C^2(\Omega)$ such that $u(x_0) = \phi(x_0)$, $u \leq \phi$ in $B_\delta(x_0)$ for some $\delta > 0$, we have the inequality
\begin{equation*}
-\I u^\phi_{\delta, x_0}(x_0) + |D \phi(x_0)|^p + \lambda u(x_0) \leq f(x_0),
\end{equation*}
where $u^\phi_{\delta, x_0}: \R^N \to \R$ is the function defined as $u^\phi_{\delta, x_0}(x) = \phi(x)$ in $B_\delta(x_0)$, $u^\phi_{\delta, x_0}(x) = u(x)$ in $B_\delta^c(x_0)$.

A function $u \in L^1_\omega(\R^N)$, lower semicontinuous in $\Omega$, is a viscosity supersolution to the Dirichlet problem~\eqref{eqdef}
 if $u \geq \varphi$ in $\Omega^c$ and for every $x_0 \in \Omega$ and every function $\phi \in L^1_\omega(\R^N) \cap C^2(\Omega)$ such that $u(x_0) = \phi(x_0)$, $u \geq \phi$ in $B_\delta(x_0)$ for some $\delta > 0$, we have the inequality
\begin{equation*}
-\I u^\phi_{\delta, x_0}(x_0) + |D \phi(x_0)|^p + \lambda v(x_0) \geq f(x_0),
\end{equation*}
where $u^\phi_{\delta, x_0}$ is defined as before.

A function $u \in L^1_\omega(\R^N) \cap C(\Omega)$ is a solution to~\eqref{eqdef} if $u = \varphi$ in $\Omega^c$ and is simultaneously a viscosity sub and supersolution to the problem.

Finally, we say that $u$ is an strict subsolution (resp. supersolution) to~\eqref{eqdef} if there exists $\epsilon > 0$ such that $u$ satisfies the viscosity inequality with $f(x_0)-\epsilon$ (resp. $f(x_0) + \epsilon$) instead of $f(x_0)$, for all $x_0 \in \Omega$. 
\end{defi}

Existence and uniqueness for solutions $u \in C(\R^N)$ can be found in~\cite{BChI}, in particular it attains the boundary data imposed by $\varphi$.
However, Definition~\ref{defsol} allows the possibility to have solutions  which are unbounded in $\Omega$. We use an approximation procedure based on Perron's method.
\begin{prop}\label{perron-alternativo}
Let $\Omega$ be a bounded, smooth domain, $f \in C(\Omega)$, $0<p\leq2s$ and $\lambda>-\lambda_0(\I)$. Suppose there exist a supersolution $\bar U$ and a subsolution $\ubar U$ of \eqref{eqgeneral} with $\bar U, \ubar U\in C(\Omega)\cap L^1_\omega(\R^N)$ with $\underline U = \bar U = \varphi$ in $\Omega^c$, and such that one of them is strict. Furthermore assume that 
\begin{align}\label{ex}
\bar U\geq\ubar U \quad \mbox{in} \ \R^N, \quad \lim\limits_{x\in\Omega,\ x\to\partial\Omega}\ubar U=+\infty.
\end{align}

Then there exists a solution $u \in C^\alpha(\Omega) \cap L^1_w(\R^N)$ of \eqref{eqgeneral} satisfying $\ubar U\leq u\leq \bar U$. 

An analogous result can be stated for sub and supersolutions $\underline U \leq \bar U$ with $\bar U(x) \to -\infty$ as $x \to \partial \Omega$.
\end{prop}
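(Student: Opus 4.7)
The plan is to use the standard Perron framework adapted to the blow-up setting, combined with the interior Hölder regularity available for fractional Bellman-type equations. Define
\[
\mathcal{S} := \{ w \in L^1_\omega(\R^N) : w \text{ is a viscosity subsolution of the equation in } \Omega,\ w = \varphi \text{ in } \Omega^c,\ \ubar U \le w \le \bar U \text{ in } \R^N\},
\]
and set $u(x) := \sup_{w \in \mathcal S} w(x)$. The family $\mathcal S$ is non-empty because $\ubar U \in \mathcal S$, and every $w \in \mathcal S$ lies in $[\ubar U, \bar U]$ pointwise.

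I would then carry out the three classical steps of Perron. First, the upper semicontinuous envelope $u^*$ is a subsolution of the equation, by the stability of viscosity subsolutions under pointwise suprema; this uses the min-sup structure of $\I$, the $L^1_\omega$ bound inherited from $\bar U$, and the test-function formulation of Definition~\ref{defsol}. Second, the lower semicontinuous envelope $u_*$ is a supersolution: if it failed at some $x_0 \in \Omega$ with test function $\phi$, one would slightly lift $\phi$ by a constant $\varepsilon > 0$ to obtain a strict subsolution on a small ball $B_\delta(x_0) \Subset \Omega$, and then glue by taking the pointwise maximum with $u$ to produce a new element of $\mathcal S$ strictly exceeding $u(x_0)$, a contradiction. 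The strictness hypothesis on $\bar U$ (or $\ubar U$) enters exactly here: either $u_*(x_0) < \bar U(x_0)$ and the bump stays below $\bar U$, or $u_*(x_0) = \bar U(x_0)$, in which case $\phi$ also tests $\bar U$ from below and, combining the strict supersolution inequality satisfied by $\bar U$ with the monotonicity of $\I$ (using $\bar U \ge u_*$ on $B_\delta^c(x_0)$), one directly contradicts the failed supersolution inequality for $u_*$. If instead $\ubar U$ is the strict one, one runs the dual argument starting from the infimum of supersolutions lying in $[\ubar U, \bar U]$.

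Third, the comparison principle for this nonlocal problem, valid under the assumption $\lambda > -\lambda_0(\I)$ (see~\cite{Topp, BChI}), applied to the pair $(u^*, u_*)$ with matching exterior data $\varphi$ on $\Omega^c$, gives $u^* \le u_*$ in $\Omega$. Combined with $u_* \le u \le u^*$, this shows that $u$ is continuous and solves~\eqref{eqgeneral}; the boundary blow-up $u \to +\infty$ is automatic from $u \ge \ubar U$. For the interior Hölder regularity $u \in C^\alpha(\Omega)$, I would invoke the local Hölder estimate for viscosity solutions of fractional HJ equations under~\eqref{ellipticity}: on any compact $K \Subset \Omega$, the solution $u$ is bounded (being sandwiched between two functions continuous in $\Omega$) and globally in $L^1_\omega(\R^N)$ (from the same bounds away from $\partial\Omega$ together with the exterior data $\varphi$), so the estimate yields $u \in C^\alpha(K)$ with quantitative control.

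The main obstacle I anticipate is the bump step at contact points $x_0$ where $u_*(x_0) = \bar U(x_0)$; handling this cleanly requires using the strict supersolution property together with monotonicity of $\I$ with respect to the tail of the test configuration, and the symmetric case (strict $\ubar U$) must be mirrored. A secondary point to verify is that near $\partial\Omega$ the bump radius $\delta$ can be chosen so $B_\delta(x_0) \subset \Omega$, so the perturbation never alters the exterior datum $\varphi$, which is needed to keep the perturbed candidate inside $\mathcal S$.
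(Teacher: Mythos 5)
Your proposal runs the classical Perron construction (supremum of admissible subsolutions, bump argument, then comparison), whereas the paper proceeds by an entirely different route: it approximates $\Omega$ by interior domains $\Omega_n$, truncates $\underline U$ and $\bar U$ near $\partial\Omega$ to obtain \emph{bounded} sub/supersolution pairs $U_{n,k}, \bar U_{n,k}$ for the problem on $\Omega_n$ with right-hand side $f-\frac1n$, invokes the existence/uniqueness theorem of~\cite{BChI} to solve those well-posed bounded problems, and then passes to the limit $n\to\infty$ using the uniform interior $C^\alpha$ estimates of~\cite{bci11} and stability of viscosity solutions. The strictness of $\underline U$ is used there not for a bump argument but to absorb the nonlocal error created by the truncation of $\underline U$ outside $\Omega_{n+k+1}$.

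The key gap in your proposal is the final comparison step. You apply the comparison principle of~\cite{Topp,BChI} to the pair $(u^*,u_*)$. Those results are formulated for bounded continuous sub/supersolutions with prescribed exterior Dirichlet data; they do not cover functions that blow up to $+\infty$ on $\partial\Omega$. Here both $u^*$ and $u_*$ diverge to $+\infty$ at $\partial\Omega$ (since they are pinched between $\underline U$ and $\bar U$, and $\underline U\to+\infty$), and the statement does not assume $\bar U-\underline U$ is bounded or vanishes at $\partial\Omega$. Consequently the supremum of $u^*-u_*$ may escape to $\partial\Omega$, and the doubling-of-variables proof of comparison need not close. This is precisely the obstruction that motivates the paper's approximation-by-interior-domains scheme, where all the quantities are bounded continuous on $\overline{\Omega_n}$ and the standard comparison is legitimately available. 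Without either adding a hypothesis of the form $\bar U-\underline U\to 0$ near $\partial\Omega$, or proving a dedicated comparison principle for boundary-blow-up sub/supersolutions (a nontrivial result in its own right, even in the local Lasry--Lions setting), the third step of your argument does not go through, and the conclusion $u^*=u_*$ does not follow. Everything up to and including the bump step is essentially sound (modulo routine verification of the monotonicity and gluing details in the nonlocal framework), but the proof as written is incomplete.
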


\begin{proof}
We assume $\underline U$ is a strict subsolution, the other case follows the same lines.
Let $\Omega_n = \{x\in\Omega : \text{dist}(x,\partial\Omega)>1/n\}$. For $k, n \in \N$, let $w_{n, k}$ be a continuous function in $\bar \Omega \setminus \Omega_{n + k + 1}$ such that $w_{n,k} = \varphi$ on $\partial \Omega$, $w_{n,k} = \underline U$ on $\partial \Omega_{n + k + 1}$ (say, the harmonic function in $\Omega \setminus \bar \Omega_{n + k+ 1}$ satisfying the mentioned boundary conditions).

Now, let $U_{n,k} : \Omega_{n}^c \to \R$ given by
\begin{equation*}
U_{n,k}(x) = \left \{ \begin{array}{ll} \underline U(x) \quad & \mbox{if} \ x \in \Omega_{n + k + 1} \setminus \Omega_n, \\
\min\{ w_{n,k}(x), \underline U(x) \} \quad & \mbox{if} \ x \in \Omega \setminus \Omega_{n + k + 1}, \\ 
\varphi(x) & \mbox{if} \ x \in \Omega^c. \end{array} \right .
\end{equation*}

Notice that $U_{n,k}$ is continuous, and $\min_{\partial \Omega} \{ \varphi \} \leq U_{n, k} \leq \underline U$ in $\Omega \setminus \Omega_{n + k + 1}$, from which, by Dominated Convergence Theorem, we have that
\begin{equation*}
\int_{\Omega \setminus \Omega_{n + k + 1}} |U_{n,k}(y) - \underline U(y) |K(x - y) dy \to 0,
\end{equation*}
as $k \to \infty$, uniformly in $x \in \Omega_n, K \in \mathcal K$, for $n$ fixed. Then, since $\underline U$ is a strict subsolution, the above estimate implies that for each $n$, there exists $k(n)$ such that, for each $k \geq k(n)$, the function $U_{n,k}$ is a viscosity subsolution to the Dirichlet problem
\begin{align}\label{eqkn}
\left \{ \begin{array}{rll} -\mathcal I u+|Du|^p+\lambda u&=f - \frac{1}{n} \quad & \text{in }\Omega_n, \\
u&= U_{n,k} \quad & \text{in }\Omega_n^c, \end{array} \right .
\end{align}
By a similar argument, using that $\bar U$ is a viscosity supersolution for the problem in $\Omega$, we can construct $\bar U_{n,k}  \in C(\R^N)$ a supersolution to~\eqref{eqkn} with $\bar U_{n,k} = \bar U$ in $\Omega_{n + k + 1}$, $U_{n,k} \leq \bar U_{n,k}$ in $\Omega$, and such that $\bar U_{n,k} = \varphi$ in $\Omega^c$, for all $k \geq k(n)$ (relabeling $k(n)$ if necessary). Thus, by Theorem 1 in~\cite{BChI}, there exists a unique viscosity solution $u_{n,k} \in C(\R^N)$ for~\eqref{eqkn}. Moreover it satisfies $\underline U \leq u_{n,k} \leq \bar U$ in $\Omega_{n + k + 1}$, for all $n$ and $k \geq k(n)$, and by construction we have $u_{n,k} \in L^1_\omega(\R^N)$ uniformly in $n$ and $k \geq k(n)$. Comparison principles are available by the assumption $\lambda > -\lambda_0 (\I)$ for all $n, k$ large.

%
%
%
%
%

Thus, the family $\{ u_{n,k(n)} \}_{n \in \N}$ have uniform interior $C^{\alpha}$ estimates by the results of~\cite{bci11}.
Using this and that the family is uniformly bounded in compact sets of $\Omega$, we can use stability results of viscosity solutions to conclude the result, taking $n \to +\infty$.
\end{proof}

\begin{remark}
We notice that the above result holds if we assume that $\bar U$ (resp. $\bar U$) is a viscosity subsolution (resp. supersolution) to~\eqref{eqgeneral} which is strict in each compact subset of $\Omega$.
\end{remark}

\section{Technical lemmas}\label{sectech}
\label{secciontecnica}

We use the notation $d: \R^N \to \R$ such that $d(x) = \mathrm{dist}(x, \partial \Omega)$ for $x \in \Omega$, and $d(x) = 0$ for $x \in \Omega^c$. 
Since the domain is smooth, we have the existence $\delta_0 > 0$ such that $d$ is a $C^2$ function on the set $\Omega_{\delta} = \{ x \in \Omega : d(x) < \delta \}$ for all $\delta < \delta_0$. 
Given $\tau \in (-1, 2s)$, we denote $d^\tau: \R^N \to \R$ such that $d^\tau(x) = (d(x))^\tau$ for $x \in \Omega$ and zero in $\Omega^c$, with the convention and $d^0 = \chi_\Omega$.

For a function $u: \R^N \to \R$ measurable, $A \subset \R^N$ measurable set and $x \in \R^N$, we denote
\begin{equation*}
L_K[A]u(x) = \mathrm{P.V.} \int_A[u(x + z) - u(x)]K(z)dz.
\end{equation*}

Here we only assume~\eqref{ellipticity}. For $K$ in this class and $\rho > 0$, we introduce the notation
\begin{equation}\label{Krho}
K^\rho(z) = \rho^{N + 2s} K(\rho z), \quad z \neq 0.
\end{equation}

Notice that $K^\rho$ satisfies~\eqref{ellipticity} with the same ellipticity constants as $K$, and that if $K$ satisfies~\eqref{Kestable}, then $K^\rho = K$.


The main result of this section is the following
\begin{prop}\label{lema.cota1}
	Let $\Omega \subset \R^N$ be a bounded domain with $C^2$ boundary, $s\in (0,1)$, and let $\mathcal K$ a family of kernels satisfying~\eqref{ellipticity}. 
	
	Then, for each $\tau \in (-1, 2s)$, there exist $\delta > 0$ such that
	$$
	\I d^{\tau}(x) = d^{\tau - 2s}(x) (c(d(x), \tau) + O(d(x)^s)), \quad x \in \Omega_\delta,
	$$
	where
$$
c(d(x), \tau) 
= \inf_{i \in I} \sup_{j \in J} \mathrm{P.V.}  \int_{\R^N} [(1 + z_N)_+^\tau - 1]K_{ij}^{d(x)}(z)dz.
$$

Finally, if we additionally assume~\eqref{Kestable}, then $c( \I, d(x), \tau)= c(\tau)$ and this constant satisfies $c(-1^+) =+\infty$, $c(2s^-) = +\infty$, $c(s - 1) = c(s) = 0$, $c(\tau) > 0$ if $\tau \in (-1, s-1) \cup (s, 2s)$ and $c(\tau) < 0$ for $\tau \in (s-1, s)$.
\end{prop}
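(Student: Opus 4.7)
The plan is to reduce the identity to a half-space computation via scaling, using the $C^2$ regularity of $\partial \Omega$. Fix $x_0 \in \Omega_\delta$ with $\rho := d(x_0)$, let $\bar x_0$ be the projection of $x_0$ on $\partial \Omega$, and after a rigid motion assume $\bar x_0 = 0$ with inward normal $e_N$, so that $x_0 = \rho e_N$. In a ball $B_{\delta_0}$ the domain has the form $\{y_N > \psi(y')\}$ with $\psi(0) = 0$, $\nabla \psi(0) = 0$, $|D^2 \psi| \leq C$, from which the pointwise estimate
$$\bigl|d(y) - (y_N - \psi(y'))_+\bigr| \leq C|y|^2, \qquad y \in B_{\delta_0},$$
is standard. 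Changing variables $y = x_0 + \rho z$ in each $L_{K_{ij}} d^\tau(x_0)$ rewrites the operator as
$$L_{K_{ij}} d^\tau(x_0) = \rho^{\tau-2s}\,\mathrm{P.V.} \int_{\R^N} [v_\rho^\tau(z)-1]\,K_{ij}^\rho(z)\,dz,$$
with $v_\rho(z) := \rho^{-1} d(x_0 + \rho z)$. The expansion above gives both the bulk bound $v_\rho(z) = (1+z_N)_+ + O(\rho(1+|z|)^2)$ on compact sets of $z$ and the crude global control $v_\rho(z) \leq 1 + |z|$ coming from $d(y) \leq |y|$ (since $0 \in \partial \Omega$).

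Setting $v_0(z) := (1+z_N)_+$, the heart of the argument is the error estimate
$$\int [v_\rho^\tau - 1]\,K_{ij}^\rho\,dz = \int [v_0^\tau - 1]\,K_{ij}^\rho\,dz + O(\rho^s),$$
with constants uniform in $(i,j)$. I would split $\R^N$ into three regions: (i) the bulk where $1 + z_N$ is bounded away from $0$, on which $t \mapsto t^\tau$ is locally Lipschitz and the expansion transfers by straightforward integration against $K^\rho_{ij}$; (ii) a thin tube $\{|1 + z_N| \leq \eta\}$ around the degenerate hyperplane, where $v^\tau$ loses regularity and one trades the thickness $\eta$ against the curvature correction to produce the exponent $s$ (a natural choice is $\eta \sim \rho^{1/2}$); and (iii) the far region $|z| \gg 1$, controlled by $K^\rho_{ij}(z) \leq \Gamma |z|^{-N-2s}$, the bound $v_\rho^\tau(z) \leq (1+|z|)^\tau$, and the condition $\tau < 2s$. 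Taking $\inf_i \sup_j$ transfers the identity to $\I$. The main technical obstacle is step (ii): near $\{z_N = -1\}$ the approximation $v_\rho \approx v_0$ loses pointwise control of $v^\tau$ (most dramatically when $\tau < 0$, where $v_0^\tau$ blows up), and one must exploit the curvature-driven $O(\rho|z|^2)$ rate together with the $\rho$-independent ellipticity of $K^\rho$ to absorb the contribution.

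For the sign properties under the homogeneity assumption \eqref{Kestable}, note that $K^\rho = K$, so $c$ is independent of $\rho$. The divergences $c(-1^+) = c(2s^-) = +\infty$ come from the non-integrability of $(1+z_N)_+^\tau$ near $z_N = -1$ for $\tau \leq -1$ and at infinity for $\tau \geq 2s$, combined with monotone convergence. For the two zeros $c(s) = c(s-1) = 0$, I would integrate out the tangential variables in each $L_{K_{ij}}$: the symmetry $a(-\xi) = a(\xi)$ and the $s$-stable scaling of $K_{ij}$ force the marginal $\tilde K_{ij}(z_N) := \int_{\R^{N-1}} K_{ij}(z', z_N)\,dz'$ to be even in $z_N$ and of the form $c_{ij} |z_N|^{-1-2s}$, which reduces $c(\tau)$ to a constant multiple of the one-dimensional integral $\int_\R [(1+z_N)_+^\tau - 1]|z_N|^{-1-2s}\,dz_N$. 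This integral vanishes precisely at $\tau \in \{s-1, s\}$, corresponding to the fact that $x_+^s$ and $x_+^{s-1}$ are the two canonical $s$-harmonic functions on a half-line for the 1D fractional Laplacian. The prescribed signs on the three subintervals $(-1, s-1)$, $(s-1, s)$, $(s, 2s)$ then follow by continuity in $\tau$ together with the boundary blow-ups, e.g., by evaluating at $\tau = 0$ where the integrand reduces to $-\chi_{\{z_N \leq -1\}}|z_N|^{-1-2s}$, giving $c(0) < 0$; this forces $c < 0$ on $(s-1, s)$ and $c > 0$ on the two flanking intervals.
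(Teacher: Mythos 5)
Your overall strategy—rescale by $\rho = d(x_0)$, compare to the half-space profile $v_0(z) = (1+z_N)_+$, and control the error by the curvature correction—is exactly the paper's. The 1D reduction of $c(\tau)$ by integrating out tangential variables is also what the paper uses (via Lemma 2.1 of [ROS]), and the identification of $\tau = s-1$, $\tau = s$ as zeros via the fundamental $s$-harmonic profiles is correct. However, there are two genuine gaps.

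\textbf{The region decomposition as sketched does not close.} Your region (i), the ``bulk'' where $1+z_N$ is bounded away from $0$, is supposed to be handled by local Lipschitz continuity of $t \mapsto t^\tau$ together with $v_\rho - v_0 = O(\rho(1+|z|)^2)$. But if ``bulk'' extends to all $|z|$ with $1 + z_N \geq \eta$, the resulting error integral is $\rho\int_{|z|>1}(1+|z|)^2|z|^{-(N+2s)}\,dz \sim \rho\int^\infty r^{1-2s}\,dr$, which \emph{diverges} since $s<1$. If instead you cut the bulk off at $|z|\leq R$, the intermediate range $\rho^{-1/2} \lesssim |z| \lesssim \rho^{-1}$ (where the chart is still defined but the correction $\rho|z'|^2$ is of order one or larger, potentially flipping the sign of $v_\rho - v_0$ entirely) has no dedicated estimate. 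This scale is precisely where the action is: the paper decomposes by the \emph{tangential} size $|z'|$ at thresholds $1$, $\eta/\sqrt{\rho}$, $\eta/\rho$, because the curvature correction depends on $|z'|$ and not on $z_N$, and obtains the $O(\rho^s)$ and $O(\rho^{1+\tau})$ error exponents from the crossover at $|z'|\sim \rho^{-1/2}$. Your ``thin tube in $z_N$ of thickness $\rho^{1/2}$'' heuristic does not produce this structure. Relatedly, your far-region control $v_\rho^\tau(z) \leq (1+|z|)^\tau$ is valid only for $\tau \geq 0$; for $\tau<0$ the inequality reverses and the bound tells you nothing, which is the very case you flagged as ``most dramatic.''

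\textbf{The sign determination is incomplete.} Continuity of $c$, the endpoint divergences $c(-1^+) = c(2s^-) = +\infty$, the two zeros at $s-1$ and $s$, and the single negative sample $c(0)<0$ do \emph{not} force the stated signs on the three subintervals: without knowing that $s-1$ and $s$ are the \emph{only} zeros, $c$ could change sign inside $(-1,s-1)$ or $(s,2s)$. The paper closes this by invoking strict concavity of $\tau \mapsto (-\Delta_\R)^s w_\tau(1)$ on $(-1,2s)$ (Proposition 3.1 of [CFQ]), which together with the two known zeros and the endpoint blow-ups pins down the signs. That concavity statement is the missing ingredient in your argument.
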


Before we continue with the proof of the proposition we will introduce some notation. When~\eqref{Kestable} holds, then we denote
$$c(\I, \tau)=c( \I, d(x), \tau) $$ 
and we will omit the dependence on $\I$ whenever the context is clear.
In particular,  if we define $\tilde \I$ as the operator $-  I (- \cdot)$ then it satisfies  ~\eqref{Kestable} so we will denote  $\tilde c(\tau)=c( \tilde \I, d(x), \tau) $. 
Moreover, since $\M^\pm$ also satisfies ~\eqref{Kestable}  we will denote $c^\pm(\tau)=c( \M^\pm, d(x), \tau) $.

\medskip

We recall that for each $x \in \partial \Omega$, there exists an open set $\mathcal U \subset \R^{N - 1}$ containing the origin, $r > 0$ and  a $C^2$ function $\psi_x: \mathcal U \to \R$ such that $\partial \Omega \cap B_{\R^N}(x,r) \subset \mathcal R_x \{ x + (z', \psi_z(z')) : z' \in \mathcal U \}$, for some rotation matrix $\mathcal R_x$. By compactness and regularity of $\partial \Omega$, we have a finite number of charts covering $\partial \Omega$ with uniform $C^2$ bounds.

Now, let $x \in \Omega$, and denote $\rho = d(x)$. After rotation, we assume $x = \rho e_N$, the projection of $x$ to $\partial \Omega$ is the origin, and therefore, that the local chart $\psi$ corresponding to this point satisfies $\psi(0) = 0, D\psi(0) = 0$. Thus, we have the existence of $C_\Omega > 0$ such that
\begin{equation}\label{psi}
|\psi(z')| \leq C_\Omega |z'|^2.
\end{equation}

The key technical step to prove Proposition~\ref{lema.cota1} is the following
\begin{lema}\label{lematecnico}
Let $K \in \mathcal K$ satisfying~\eqref{ellipticity}, and $\tau \in (-1, 2s)$. Let $\eta \in (0,1)$, $Q_\eta = B_\eta' \times (-\eta, \eta) \subset \R^N$. Let $x = (0', \rho) \in \Omega$ such that the projection of $x$ to $\partial \Omega$ is the origin, and that $x \in Q_{\eta}$. Denote
	$$
	I := \lim_{\epsilon \to 0} \int_{Q_\eta \setminus B_\epsilon} [(\rho + y_N - \psi(y'))_+^\tau - \rho^\tau] K(y) dy.
	$$

Then, the limit exists. For each $\eta > 0$ small enough, and all $\rho > 0$ small enough in terms of $\eta$, we have the expansion 
 \begin{equation}\label{claim1}
	I = \rho^{\tau - 2s} (c_K(\rho, \tau) + O(\rho^s) + O(\rho^{\tau + 1})),	
	\end{equation}
	where
	$$
	c_K(\rho, \tau) = \mathrm{P.V.} \int_{\R^N} [(1 + z_N)_+^\tau - 1] K^\rho(z) dz,
	$$
and the $O$ terms depend only on $N,s, \Omega, 1 + \tau$, $\eta$ and the ellipticity constants.
\end{lema}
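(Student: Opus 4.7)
My strategy is a change of variables to the blow-up coordinates centered at $x$, followed by comparison with a model ``flat half-space'' integral.

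First I would verify that the principal value limit defining $I$ exists. A first-order Taylor expansion gives
\begin{equation*}
(\rho + y_N - \psi(y'))_+^\tau - \rho^\tau = \tau \rho^{\tau-1} y_N + R(y),
\end{equation*}
with $|R(y)| \leq C(|y_N|^2 + |\psi(y')|) \leq C(|y_N|^2 + |y'|^2)$ near the origin. Since $K(-y) = K(y)$ and $Q_\eta \setminus B_\epsilon$ is symmetric, the linear term $\tau\rho^{\tau-1} y_N K(y)$ integrates to zero; the remainder combined with the ellipticity $K(y) \leq \Gamma |y|^{-N-2s}$ and $s<1$ is absolutely integrable near $y=0$, so the PV limit exists.

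Next, the scaling. Substituting $y = \rho z$ and using $K^\rho(z) = \rho^{N+2s}K(\rho z)$ (which inherits the same ellipticity bounds as $K$), I factor $\rho^\tau$ from the integrand to obtain
\begin{equation*}
I = \rho^{\tau - 2s}\, J_\rho, \qquad J_\rho := \mathrm{P.V.}\!\int_{Q_{\eta/\rho}} \bigl[(1 + z_N - \tilde\psi(z'))_+^\tau - 1\bigr] K^\rho(z)\, dz,
\end{equation*}
where $\tilde\psi(z') := \rho^{-1}\psi(\rho z')$ satisfies the crucial bound $|\tilde\psi(z')| \leq C_\Omega \rho |z'|^2$ on $Q_{\eta/\rho}$. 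I would then compare $J_\rho$ with $c_K(\rho,\tau)$ by decomposing $J_\rho - c_K(\rho,\tau) = A - B$, where
\begin{equation*}
A = \int_{Q_{\eta/\rho}} \bigl[(1+z_N-\tilde\psi)_+^\tau - (1+z_N)_+^\tau\bigr] K^\rho(z)\, dz,
\end{equation*}
\begin{equation*}
B = \int_{Q_{\eta/\rho}^c} \bigl[(1+z_N)_+^\tau - 1\bigr] K^\rho(z)\, dz.
\end{equation*}
For the tail $B$, I would use the pointwise growth bound $|(1+z_N)_+^\tau - 1| \leq C(1 + |z|^\tau)$ together with $K^\rho(z) \leq \Gamma |z|^{-N-2s}$; the ensuing scaling integral on $|z| \gtrsim \eta/\rho$ produces a decaying power of $\rho$. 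For the main correction $A$, I would partition $Q_{\eta/\rho}$ according to the relative size of $1+z_N$ and $|\tilde\psi(z')|$. On the regular set $\{1+z_N \geq 2|\tilde\psi(z')|\}$ the mean value inequality gives
\begin{equation*}
|(1+z_N-\tilde\psi)_+^\tau - (1+z_N)_+^\tau| \leq C\,\rho |z'|^2 (1+z_N)^{\tau-1},
\end{equation*}
and integrating against $|z|^{-N-2s}$ yields the $O(\rho^{\tau+1})$ contribution. On the thin transition strip $\{|1+z_N| \leq 2|\tilde\psi(z')|\}$ I would bound each factor directly by $|\tilde\psi(z')|^\tau$ for $\tau \geq 0$, or by integrability of $(1+z_N)_+^\tau$ near $z_N = -1$ for $\tau < 0$, using that the slab has $z_N$-width $O(\rho|z'|^2)$.

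The main obstacle I anticipate is the careful bookkeeping around this transition strip, where the mean value expansion breaks down and both $(\cdot)_+^\tau$ factors must be estimated separately. This requires subdividing the $z'$-range (for instance at the scale $|z'| \sim \rho^{-1/2}$, where $|\tilde\psi|$ becomes comparable to $1$) so that the slab width, the power-law behavior of $(\cdot)_+^\tau$ near zero, and the kernel decay combine to the claimed rates. Once these pieces are assembled and the global factor $\rho^{\tau-2s}$ is reinserted, the expansion~\eqref{claim1} follows; the uniformity of the $O$-constants in $K \in \mathcal{K}$ is automatic since only the ellipticity bound is used throughout, and the dependence on $N, s, \Omega, 1+\tau, \eta$ can be read off from the explicit integrals.
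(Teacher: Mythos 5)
Your decomposition is exactly the paper's: after rescaling by $y = \rho z$, you write $J_\rho - c_K(\rho,\tau) = A - B$ where $A$ is the curvature correction (the paper's $\tilde I$ after rescaling) and $B$ is the tail over $Q_{\eta/\rho}^c$ (the paper's $I_{01}$); the existence of the PV limit via symmetry of $K$ and $|\psi(y')|\lesssim|y'|^2$ is also the right argument. Your organization of the estimate for $A$ (regular set vs.\ transition strip, then a subdivision at $|z'|\sim\rho^{-1/2}$) differs cosmetically from the paper's direct subdivision of $Q_{\eta/\rho}$ into $|z'|\leq 1$, $1\leq|z'|\leq\eta/\sqrt{\rho}$, $\eta/\sqrt{\rho}\leq|z'|\leq\eta/\rho$, but both hinge on the same critical scale $|z'|\sim\rho^{-1/2}$, so the approach is essentially the same.

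Two points need repair, though. First, the pointwise bound $|(1+z_N)_+^\tau - 1|\leq C(1+|z|^\tau)$ you invoke for the tail $B$ is false when $\tau<0$: on $Q_{\eta/\rho}^c$ with $|z'|\geq\eta/\rho$ and $z_N\to-1^+$ the left side blows up while $|z|^\tau\leq(\eta/\rho)^\tau\to 0$. The fix (used implicitly in the paper) is to integrate in $z_N$ first and use $\int_{-1}^{0}(1+z_N)^\tau\,dz_N<\infty$ since $\tau>-1$, which produces the factor $(1+\tau)^{-1}$ that the statement explicitly allows the $O$-constants to depend on. Second, your claim that the regular set $\{1+z_N\geq 2|\tilde\psi|\}$ contributes only $O(\rho^{\tau+1})$ is not accurate: for $|z'|\gtrsim\rho^{-1/2}$ the mean-value bound $\rho|z'|^2(1+z_N)^{\tau-1}$ is no longer small, and a direct computation over that annulus yields a contribution of order $\rho^{s}$ (this is exactly the paper's $I_3$ estimate, obtained by bounding the integrand crudely and using $\int_{\eta/\sqrt\rho}^{\infty}r^{-(1+2s)}\,dr\lesssim\rho^s$). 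So both error terms $O(\rho^{\tau+1})$ and $O(\rho^s)$ genuinely arise from the regular set, not only from the transition strip. Neither issue invalidates the strategy, but both must be addressed to close the argument.
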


We use this estimate to prove our main result of this section.
\begin{proof}[Proof of Proposition~\ref{lema.cota1}]
For an arbitrary linear operator $L = L_K$ in the family, and for $\eta$ as in Lemma~\ref{lematecnico}, we write 
\begin{equation}\label{primera}
L d^\tau (x) = L[Q_\eta] d^\tau(x) + L[Q_\eta^c] d^\tau(x).
\end{equation}

It is easy to see that if $\rho < \eta/4$, we have
\begin{equation*}
|L[Q_\eta^c] d^\tau(x)| \leq \int_{Q_\eta^c} |d^\tau(x + y) - \rho^\tau|K(y)dy \leq C\Lambda (c_\tau \eta^{-(N + 2s)} + \rho^\tau \eta^{-2s}),
\end{equation*}
for some $C > 0$ just depending on $N, s, \Omega$. 

From now on we concentrate on $L[Q_\eta] d^\tau(x)$ in~\eqref{primera}. For each $z = (z', z_N) \in \Omega \cap Q_\eta$, we have
$$
d(z) \leq z_N - \psi(z'), 
$$
from which we directly have
\begin{equation}\label{Ibelow}
L d^\tau (x) \geq I,
\end{equation}
with $I$ as in Lemma~\ref{lematecnico}.

On the other hand, by the smoothness of the domain, we use Lemma 3.1 in~\cite{CFQ}, from which we get the existence of a constant $C_\Omega > 0$ just depending on $\Omega$ and $N$ such that
\begin{equation*}
d(y) \geq (y_N - \psi(y'))(1 - C_\Omega|y'|^2),
\end{equation*}
for $y$ close to the boundary, near $x_0$. Thus, taking $\eta$ small enough in terms of $C_\Omega$, we also have
\begin{equation*}
L d^\tau (x) \leq \lim_{\epsilon \to 0^+} \int_{Q_\eta \setminus B_\epsilon} [(\rho + y_N - \psi(y'))_+^\tau (1 + C_\Omega |y'|^2) - \rho^\tau] K(y) dy, 
\end{equation*}
and from here, it is easy to see that
\begin{equation}\label{Ldtau0}
\begin{split}
L d^\tau (x) \leq I + C_\Omega \int_{Q_\eta} (\rho + y_N - \psi(y'))^\tau_+ |y'|^2 K(y)dy.
\end{split}
\end{equation}

Now, for the second term in the last expression, we can write
\begin{align*}
& \int_{Q_\eta} (\rho + y_N - \psi(y'))^\tau_+ |y'|^2 K(y)dy \\
\leq & \Lambda \int_{B_\eta'} |y'|^{2 - N - 2s} \int_{\psi(y') - \rho}^{\eta} (\rho + y_N - \psi(y'))^\tau dy_N dy' \\
\leq & \Lambda \frac{1}{1 + \tau} (\rho + \eta/4)^{1 + \tau}\int_{B_\eta'} |y'|^{2 - N - 2s} dy' \\
\leq & C \frac{\eta^{1 + \tau}}{1 + \tau},
\end{align*}
where we have used the fact that $\psi(z') \leq \eta/4$ for $|z'| \leq \eta$. Hence, replacing in~\eqref{Ldtau0} and using~\eqref{Ibelow}, we conclude that
\begin{align}\label{Iabove}
I \leq L d^\tau (x) \leq & I + C C_\Omega \frac{1}{1 + \tau},
\end{align}
where the constant $C > 0$ depends on $N, s, \Lambda$

Then, by Lemma~\ref{lematecnico}, we get	\begin{equation*}
	L d^\tau(x) = \rho^{\tau - 2s} (c_K(\rho, \tau) + O(\rho^s) + O(\rho^{1 + \tau}) + O(\rho^{2s - \tau})).
	\end{equation*}
	
We get from here that
	\begin{equation*}
	\I d^\tau(x) = \rho^{\tau - 2s} \Big{(} \inf_{i \in I} \sup_{j \in J}  c_{K_{ij}}(\rho, \tau)  + O(\rho^s) + O(\rho^{1 + \tau}) \Big{)},
	\end{equation*}
	from which the first result follows.
	
	For the last part of the proposition, since the operator  are of the form~\eqref{Kestable} we have $K_{ij}^\rho = K_{ij}$, then  Lemma 2.1 in~\cite{ROS} we arrive at 
        \begin{equation*}
        c(\tau):=\inf_{i \in I} \sup_{j \in J}  c_{K_{ij}}(\tau) = \inf_i \sup_j \{ -\tilde a_{ij} \ (-\Delta_{\R})^s w_\tau (1) \}=:a^*(-\Delta_{\R})^s w_\tau (1) 
        \end{equation*}
        where $(\Delta_{\R})^s$ denotes the fractional Laplacian in dimension one, $w_\gamma(t) = t_+^{\tau}$ for $t \in \R$, and 
        $$
        \tilde a_{ij} := \int_{S^{N - 1}} |\theta_N|^{2s} a_{ij}(\theta) d\sigma(\theta),
        $$
        where $\sigma$ denotes the $N-1$ dimensional Hausdorff measure in the unit sphere. 
Now if we define   $c_1(\tau)= (-\Delta_{\R})^s w_\tau (1) $ the qualitative properties follows since the function $c_1$ strictly concave in (-1,2s) by Proposition 3.1 of \cite{CFQ} and $c_1(-1^+) =+\infty$, $c_1(2s^-) = +\infty$. Moreover,  $c_1(s)= c_1(s-1) = 0$ by Lemma 6.2 of ~\cite{ROS}.

\end{proof}

The rest of the section is devoted to the
\begin{proof}[Proof of Lemma~\ref{lematecnico}] We concentrate on the case $\tau < 0$ since it is the most difficult due to the unboundedness of $d^\tau$.

We write
$$
I = I_0 + \tilde I,
$$
with 
\begin{align*}
I_0 = & \mathrm{P.V.} \int_{Q_\eta} [(\rho + y_N)_+^\tau - \rho^{\tau}] K(y)dy, \\
\tilde I = & \int_{Q_\eta} [(\rho + y_N - \psi(y'))_+^\tau - (\rho + y_N)_+^\tau] K(y)dy,
\end{align*}
where the last integral is well-defined since, using~\eqref{psi}, we have
$$
|(\rho + y_N - \psi(y'))_+^\tau - (\rho + y_N)^\tau| \leq C\rho^{\tau - 2} |y|^2, 
$$
for all $|y|$ small enough in terms of $\rho$. This is enough to compensate the singularity of the kernel $K$ and pass to the limit as $\epsilon \to 0$ using Dominated Convergence Theorem.

\medskip

\noindent
\textsl{1.- Estimate for $\tilde I$.} Rescaling, we have
	$$
	\tilde I =  \rho^{\tau - 2s} \int_{Q_{\eta/\rho}} [(1 + z_N - \tilde \psi(z'))_+^\tau - (1 + z_N)_+^\tau] K^\rho(z)dz,
	$$
	where $\tilde \psi(z') := \rho^{-1} \psi(\rho z')$ and $K^\rho(z) = \rho^{N + 2s} K(\rho z)$. Notice that by the ellipticity condition we have
	\begin{equation*}
	\gamma |z|^{-(N + 2s)} \leq K^\rho(z) \leq \Lambda |z|^{-(N + 2s)}, \quad z \in \R^N \setminus \{ 0 \}, \ \rho > 0.
	\end{equation*}
	
	For this, we divide the integral in several parts. Namely, we consider the splitting
	\begin{align}\label{splittildeI}
	d^{2s - \tau} \tilde I = I_1 + I_2 + I_3,
	\end{align}
	where for $i = 1,2,3$ we denote
	\begin{equation*}
	\begin{split}
	& I_i = \int_{A_i} [(1 + z_N - \tilde \psi(z'))_+^\tau - (1 + z_N)_+^\tau] K^\rho(z)dz, \ \mbox{with} \\
	&  A_1 = B_1' \times (-\frac{\eta}{\rho}, \frac{\eta}{\rho}), \ A_2 = B_{\frac{\eta}{\sqrt{\rho}}}' \setminus B_1' \times (-\frac{\eta}{\rho}, \frac{\eta}{\rho}), \ A_3 = Q_{\frac{\eta}{\rho}} \setminus (A_1 \cup A_2).
	\end{split}
	\end{equation*}

For $I_1$, we notice that by the assumptions on the chart $\psi$ (c.f.~\eqref{psi}), we have $|\tilde \psi(z')| \leq \rho |z'|^2$ if $|z'| \leq 1$. Then, we perform the subdivision 
\begin{align*}
& I_1 = I_{11} + I_{12} + I_{13}, \\
& A_{11} = B_1' \times (-\frac{\eta}{\rho}, -\frac{1}{2}), \ A_{12} = B_1' \times  (-\frac{1}{2}, \frac{1}{2}), \ A_{13} = B_1' \times  (\frac{1}{2}, \frac{\eta}{\rho}),
\end{align*}
where we have adopted a similar notation as in~\eqref{splittildeI}. 

For $I_{11}$, we make a subdivision with the form
\begin{align*}
I_{11} \leq &  \int_{B_1'} \int_{\tilde \psi_-(z') - 1}^{-1} (1 + z_N - \tilde \psi(z'))^\tau K^\rho(z)dz \\
&+ \int_{B_1'} \int_{\tilde \psi_+(z') - 1}^{-1} [(1 + z_N - \tilde \psi_+(z'))^\tau - (1 + z_N)^\tau]K^\rho(z)dz \\
=: & I_{111} + I_{112},
\end{align*}
where, for $a \in \R$, we have written $a = a_+ + a_-$.

Using the ellipticity condition and integrating by parts, we can write
\begin{align*}
I_{112} \leq&  \Gamma \int_{B_1'} \int_{\tilde \psi_+(z') - 1}^{-1/2} \frac{(1 + z_N - \tilde \psi_+(z'))^\tau - (1 + z_N)^\tau}{|z|^{N+2s}} dz_N dz' \\
= &  \frac{\Gamma}{1 + \tau} \int_{B_1'} \Big{\{} \frac{(1 + z_N - \tilde \psi_+(z'))^{\tau + 1} - (1 + z_N)^{\tau + 1}}{|z|^{N+2s}} \Big{|}_{z_N = \tilde \psi_+(z') - 1}^{z_N = -1/2} \\
& \qquad \qquad + (N + 2s) \int_{\tilde \psi_+ - 1}^{-1/2} \frac{(1 + z_N - \tilde \psi_+(z'))^{\tau + 1} - (1 + z_N)^{\tau + 1}}{|z|^{N+2s +1}} z_N dz_N \Big{\}}dz' \\ 
\leq &  \frac{C \Gamma}{1 + \tau} \int_{B_1'} \Big{\{} \frac{(\tau + 1) \rho |z'|^2 + \rho^{\tau + 1} |z'|^{2(\tau + 1)}}{(1 + |z'|^2)^{(N+2s)/2}} \\
& \qquad \qquad + \int_{\tilde \psi_+(z') - 1}^{-1/2} \frac{(1 + z_N - \tilde \psi_+(z'))^{\tau + 1} - (1 + z_N)^{\tau + 1}}{|z|^{N+2s +1}} z_N dz_N \Big{\}}dz' \\ 
\leq &  \frac{C \Gamma}{1 + \tau}  \Big{\{} \rho^{\tau + 1} +
\int_{B_1'} \int_{\tilde \psi_+(z') - 1}^{-1/2} \frac{(1 + z_N - \tilde \psi_+(z'))^{\tau + 1} - (1 + z_N)^{\tau + 1}}{|z|^{N+2s +1}} z_N dz_N dz' \Big{\}},
\end{align*}
for some universal constant $C > 0$. Here we have used that $|\tilde \psi(z')| \leq C \rho |z'|^2$ for $|z'| \leq 1$. Since $1 + \tau > 0$, we see that
\begin{align*}
I_{112} \leq &  \frac{C \Gamma}{1 + \tau}  \Big{\{} \rho^{\tau + 1} +
\int_{B_1'} \frac{\rho |z'|^2}{(1 + |z'|^2)^{\frac{N + 2s + 1}{2}}} \int_{\tilde \psi_+ - 1}^{-1/2} (1 + z_N)^\tau dz_N dz' \Big{\}},
\end{align*}
and from this we conclude that
\begin{equation*}\label{I11upper}
I_{112} \leq \frac{C \Gamma}{1 + \tau} \rho^{\tau + 1}.
\end{equation*}

For $I_{111}$, by direct integration and the estimates for $\tilde \psi$ we see that
\begin{align*}
I_{111} \leq C\Gamma \int_{B_1'} |\tilde \psi(z')|^{1 + \tau} dz \leq C \Gamma \rho^{1 + \tau}.
\end{align*}

For the lower bound we proceed similarly, noticing that this time we can write
\begin{equation}\label{cotainf}
\begin{split}
I_{11} \geq & -\int_{B_1'} \int_{-1}^{- 1 + \tilde \psi_+(z')} (1 + z_N)^\tau K^\rho(z)dz \\
& + \int_{B_1'} \int_{- 1 + \tilde \psi_+(z')}^{-1/2} [(1 + z_N - \tilde \psi_+(z'))^\tau - (1 + z_N)^\tau] K^\rho(z)dz 
\end{split}
\end{equation}
From here, by direct integration in the first term in the last inequality, and a similar procedure leading to the estimate concerning $I_{112}$ above for the second term, we conclude that
\begin{equation}\label{I11}
-\frac{C\Gamma}{1 + \tau} \rho^{\tau + 1} \leq I_{11} \leq \frac{C \Gamma}{1 + \tau} \rho^{\tau + 1},
\end{equation}

\medskip

For $I_{12}$, we use that $|\tilde \psi(z')| \leq C \rho |z'|^2$ to perform a first-order Taylor expansion to get
\begin{align}\label{I12}
-C \Gamma \rho \leq I_{12} \leq C \Gamma \int_{B_1'} \int_{-1/2}^{1/2} \frac{\rho |z'|^2}{|z|^{N + 2s}} dz_N dz' \leq C \Gamma \rho.
\end{align}

\medskip

For $I_{13}$, we perform a Taylor expansion again, from which we can write
\begin{align*}
I_{13} & \leq C\Gamma \rho \int_{B_1'} |z'|^2 \int_{1/2}^{\eta/\rho} \frac{dz_N}{(|z_N|^2 + |z'|^2)^{\frac{N + 2s} {2}}}dz' \\
& \leq C\Gamma \rho \int_{B_1'} |z'|^2 |z'|^{-(N + 2s) + 1} \int_{0}^{+\infty} \frac{dt}{(t^2 + 1)^{\frac{N + 2s} {2}}}dz'.
\end{align*}
A similar lower bound can be easily obtained, from which we conclude that
$-C \Gamma \rho \leq I_{13} \leq C\Gamma \rho.$ Gathering this estimate together with~\eqref{I12} and~\eqref{I11} lead us to 
\begin{align}\label{I1}
I_1 = O(\rho^{\tau + 1})
\end{align}

\medskip

Now we proceed with $I_{2}$ in~\eqref{splittildeI}. We write
\begin{align*}
& I_2 = I_{21} + I_{22}, \ \mbox{with} \\
& A_{21} = B_{\eta/\sqrt{\rho}}' \setminus B_1' \times (-\frac{\eta}{\rho}, -\frac{1}{2}), \ A_{22} = B_{\eta/\sqrt{\rho}}' \setminus B_1' \times (-\frac{1}{2}, \frac{\eta}{\rho}),
\end{align*}
where we have adopted a similar notation as in~\eqref{splittildeI}. 

We start by noticing that $|\tilde \psi(z')| \leq C \eta^2$ when $|z'| \leq \eta/\sqrt{\rho}$, and from here, fixing $\eta > 0$ universally small, we can write
\begin{align*}
I_{21} \leq &  \int_{B_{\eta/\sqrt{\rho}}' \setminus B_1'} \int_{\tilde \psi_-(z') - 1}^{-1} (1 + z_N - \tilde \psi(z'))^\tau K^\rho(z)dz \\
&+ \int_{B_{\eta/\sqrt{\rho}}' \setminus B_1'} \int_{\tilde \psi_+(z') - 1}^{-1/2} [(1 + z_N - \tilde \psi_+(z'))^\tau - (1 + z_N)^\tau]K^\rho(z)dz \\
=: & I_{211} + I_{212},
\end{align*}

For $I_{211}$ we have
\begin{align*}
I_{211} \leq & C \Gamma \int_{B_{\eta/\sqrt{\rho}}' \setminus B_1'}  |z'|^{-(N + 2s)} \int_{\tilde \psi_-(z') - 1}^{-1} (1 + z_N - \tilde \psi(z'))^\tau dz_N dz' \\
\leq & C \Gamma \rho^{\tau + 1} \int_{B_{\eta/\sqrt{\rho}}' \setminus B_1'}  |z'|^{-(N + 2s) + 2(\tau + 1)} dz' \\
\leq & \frac{C}{1 - 2s}\Gamma \rho^{1 + \tau},
\end{align*}

For $I_{212}$, by taking $\eta$ small enough but independent of $\rho$, we have
\begin{align*}
I_{21} & \leq C \Lambda \int_{B_{\eta/\sqrt{\rho}}' \setminus B_1'} \frac{1}{(1 + |z'|)^{N+2s}} \int_{\tilde \psi_+(z') - 1}^{-1/2} [(1 + z_N - \tilde \psi_+(z'))^\tau - (1 + z_N)^\tau] dz_N dz' \\
& \leq C \frac{\Lambda}{1 + \tau} \int_{B_{\eta/\sqrt{\rho}}' \setminus B_1'} \frac{1}{(1 + |z'|)^{N+2s}} \Big{(}(\frac{1}{2} - \tilde \psi_+(z'))^{\tau + 1} - (\frac{1}{2})^{\tau + 1} + \tilde \psi_+(z')^{\tau + 1}\Big{)} dz' \\
& \leq C \frac{\Lambda}{1 + \tau} \Big{(} \rho \int_{B_{\eta/\sqrt{\rho}}' \setminus B_1'} \frac{|z'|^2}{(1 + |z'|)^{N+2s}} dz' +  \rho^{\tau + 1} \int_{B_{\eta/\sqrt{\rho}}' \setminus B_1'} \frac{|z'|^{2(\tau + 1)}}{(1 + |z'|)^{N+2s}} dz' \Big{)} \\
& \leq C \rho^{1 + \tau},
\end{align*}
and from here, collecting the above estimates, we conclude that $I_{21} \leq C \Gamma \rho^{1 + \tau}$. In a similar fashion as in~\eqref{cotainf} but applied to $I_{21}$, we arrive at the estimate
\begin{equation}\label{I21}
I_{21} = O(\rho^{1 + \tau}).
\end{equation}

For $I_{22}$, taking $\eta$ universally small, there exist $0 < c, C < +\infty$ such that
$$
c (1 + z_N)^{\tau - 1} |\tilde \psi_+(z')| \leq (1 + z_N - \tilde \psi_+(z'))^\tau - (1 + z_N)^\tau = C (1 + z_N)^{\tau - 1} |\tilde \psi_+(z')|,
$$
from which we can write
\begin{align*}
I_{22} = O(1) \int_{B_{\eta/\sqrt{\rho}}' \setminus B_1'} \frac{\rho |z'|^2}{(1 + |z'|)^{N+2s}} \int_{-1/2}^{+\infty} (1 + z_N)^{\tau - 1} dz_N dz',
\end{align*}
and from here we conclude that $I_{22} = O(\rho)$. This together with~\eqref{I21} lead us to
\begin{equation}\label{I2}
I_{2} = O(\rho^{1 + \tau}). 
\end{equation}

Now we deal with $I_3$. This time we consider the splitting
\begin{align*}
& I_3 = I_{31} + I_{32}, \ \mbox{with} \\
& A_{31} = \{ (z', z_N) : z' \in B_{\eta/\rho}' \setminus B_{\eta/\sqrt{\rho}}', \ z_N \in (-\eta/\rho, \tilde \psi_+(z') + 1) \}, \\
& A_{32} = \{ (z', z_N) : z' \in B_{\eta/\rho}' \setminus B_{\eta/\sqrt{\rho}}', \ z_N \in (\tilde \psi_+(z') + 1, \eta/\rho) \},
\end{align*}
where we have adopted the notation in~\eqref{splittildeI}.

For $I_{31}$ we see that
\begin{align*}
I_{31} & \leq \Lambda \int_{B_{\eta/\rho}' \setminus B_{\eta/\sqrt{\rho}}'} \int_{\tilde \psi_+(z') - 1}^{\tilde \psi_+(z') + 1} \frac{(1 + z_N - \tilde \psi_+(z'))^\tau}{|z'|^{N + 2s}}dz_N dz' \\
& \leq C\Lambda \int_{B_{\eta/\rho}' \setminus B_{\eta/\sqrt{\rho}}'} \frac{1}{|z'|^{N + 2s}} \int_{0}^{2} (1 + t)^\tau dt dz' \\
& \leq C\Lambda \int_{\eta/\sqrt{\rho}}^{\eta/\rho} r^{-(2 + 2s)}dr,
\end{align*}
from which we conclude that
$I_{31} \leq C\Lambda \rho^{s + 1/2}$.

On the other hand, since $\tau < 0$, for $I_{32}$ we can write
\begin{align*}
I_{32} & \leq \Lambda \int_{B_{\eta/\rho}' \setminus B_{\eta/\sqrt{\rho}}'} \int_{\tilde \psi_+(z') + 1}^{+\infty} \frac{(1 + z_N - \tilde \psi_+(z'))^\tau}{(z_N^2 + |z'|^2)^{(N + 2s)/2}}dz_N dz' \\
& \leq \Lambda \int_{B_{\eta/\rho}' \setminus B_{\eta/\sqrt{\rho}}'} \int_{1}^{+\infty} \frac{1}{(z_N^2 + |z'|^2)^{(N + 2s)/2}}dz_N dz' \\
& \leq \Lambda \int_{B_{\eta/\rho}' \setminus B_{\eta/\sqrt{\rho}}'} \frac{1}{|z'|^{N + 2s - 1}} \int_{0}^{+\infty} \frac{1}{(t^2 + 1)^{(N + 2s)/2}}dt dz',
\end{align*}
and from here we conclude that
\begin{equation*}
I_{32} \leq C\Lambda \int_{\eta/\sqrt{\rho}}^{+\infty} r^{-(1 + 2s)} dr \leq C \Lambda \rho^s.
\end{equation*}

Then, collecting the previous estimates, we conclude that
\begin{equation*}\label{I3}
I_3 \leq C \rho^s.
\end{equation*}

For the lower bound, we see that
\begin{equation*}
I_3 \geq - \int_{B_{\eta/\rho}' \setminus B_{\eta/\sqrt{\rho}}'} \int_{-\frac{\eta}{\rho}}^{-1} (1 + z_N)^\tau K^\rho(z)dz,
\end{equation*}
and arguing similarly as before, we arrive at $I_3 \geq -C \rho^s$. Hence, we conclude that $I_3 = O(\rho^s)$. Using this estimate, together with~\eqref{I1} and~\eqref{I2} and replacing them into~\eqref{splittildeI} we conclude that
\begin{equation}\label{tildeI}
\tilde I = \rho^{\tau - 2s} (O(\rho^{1 + \tau}) + O(\rho^{s})),
\end{equation}
where the $O$-term depends on $N, s, \Omega, \eta, \frac{1}{1 + \tau}$.

\medskip
\noindent
\textsl{2.- Estimate for $I_0$.} Rescaling, we have
\begin{align*}
I_0 = & \rho^{\tau - 2s} \mathrm{P.V.} \int_{Q_{\eta/\rho}} [(1 + z_N)_+^\tau - 1] K^\rho(z)dz \\
= & \rho^{\tau - 2s} \mathrm{P.V.} \int_{\R^N} [(1 + z_N)_+^\tau - 1] K^\rho (z)dz 
 + \rho^{\tau - 2s} \int_{Q_{\eta/\rho}^c} [(1 + z_N)_+^\tau - 1] K^\rho(z)dz\\
=: & \rho^{\tau - 2s} c_K(\rho, \tau) + I_{01}.
\end{align*}

Notice that 
$$
-C \Gamma \rho^{\tau - 2s} \rho^{2s} \leq I_{01}.
$$

On the other hand
\begin{align*}
I_{01} \leq & \Gamma \rho^{\tau - 2s} \int_{Q_{\eta/\rho}^c \cap \{ z_N > -1 \} } (1 + z_N)^{\tau} |z|^{-(N + 2s)} dz \\
\leq & C\Gamma \rho^{- 2s} \int_{B_{\eta/\rho}'} \int_{\eta/\rho}^{+\infty} |z|^{-(N + 2s)}dz + C\Gamma \rho^{\tau - 2s} \frac{1}{1 + \tau}\int_{B_{\eta/\rho}'^c} |z'|^{-(N + 2s)}dz' \\
\leq & C \Gamma (1 + \rho^{1 + \tau}).
\end{align*}

Summarizing, we have
\begin{equation*}
I_{01} = O(\rho^\tau).
\end{equation*}

%
Joining this together with~\eqref{tildeI} we conclude the result.
\end{proof}



\section{ Proof of Theorem~\ref{teofbdd}}\label{secteo}

This section is entirely devoted to the proof of Theorem~\ref{teofbdd}. We recall that for a given family of kernels $\mathcal K$ satisfying~\eqref{ellipticity}, we consider the extremal operators associated to this family as 
$$
\mathcal M^{+} u(x) = \sup_{K \in \mathcal K} L_K u(x), \quad \mathcal M^{-} u(x) = \inf_{K \in \mathcal K} L_K u(x).
$$

For every operator with the form~\eqref{opp}, each admissible function $u$ and $x \in \R^N$, we have 
\begin{equation}\label{extre}
 \M^{-} u(x) \leq\I(v+u)(x)-\I v(x) \leq \M^{+} u(x), 
\end{equation}
see~\cite{CS}.

\begin{proof}[Proof of Theorem~\ref{teofbdd}] We prove each case separately. We give the general remark that both sub and supersolutions we construct here are strict.

\medskip
\noindent
\textsl{Case 1:} For $t > 0$ and $s-1 < \gamma <  2s - 1<s$ to be fixed, we consider
\[
U^-_t=td^{s-1} - C_1d^{\gamma},
\]
for some $C_1>0$ to be chosen.

Then, using Proposition~\ref{lema.cota1} together with assumption~\eqref{Kestable} , we have  $c(s-1) = 0$ and $c^+(\gamma)<0$.
Then using \eqref{extre} we find

\begin{align*}
-\I U^-_t +|DU^-_t|^p \leq & -\I(td^{s-1})+C_1M^+(d^\gamma)+|DU^-_t|^p \\
\leq & tO(d^{-1})+C_1c^+(\gamma) d^{\gamma-2s}+|(t (s-1) d^{s - 2} - C_1 \gamma d^{\gamma - 1}) Dd|^p \\
= & tO(d^{-1})+C_1c^+(\gamma) d^{\gamma-2s} \\
& +|t (s-1)|^p d^{(s - 2)p}|1 - C_1 t^{-1} (s-1)^{-1}\gamma d^{\gamma + 1 - s}|^p
\end{align*}

Since $p < p_2$ we can take $\gamma > s - 1$ such that $(s - 2)p > \gamma-2s$ (notice that if $p < p_0$, then $\gamma$ can be taken positive). 

For such a $\gamma$,  we take 
$$
\bar C_1 = |t (s-1)|^p/|c^+(\gamma)|,
$$
to conclude that there exists $\bar c_1 > 0$ such that, for every $\epsilon > 0$, we take $C_1 = \bar C_1 - \epsilon$ in the expression above to obtain that
\[
-\I U^-_t +|DU^-_t|^p \leq tO(d^{-1}) - \bar c_1 \epsilon d^{\gamma - 2s},
\]
for each $d \leq d_\epsilon$ for some $d$ small enough in terms of $\epsilon, t, C_1, \gamma.$ Since $s > 1/2$, and $\gamma < 2s-1$ we can take $d_\epsilon$ smaller to conclude that
\[
-\I U^-_t +|DU^-_t|^p +  \leq - \frac{\bar c_1 \epsilon}{2} d^{\gamma - 2s} \leq - \| f \|_\infty , \quad \mbox{for} \ d(x) \leq d_\epsilon,
\]
and therefore we have constructed a subsolution near the boundary. A straightforward computation tells us that the function
$$
U^- - C\chi_\Omega,
$$
is a viscosity subsolution to the problem in $\Omega$, when $C = C_\epsilon$ is taken large . Thus, we have constructed the subsolution.

In a similar way, we can construct a supersolution in $\Omega$ with the form
\[
U^+_t=td^{s-1} + (\bar C_1+ \epsilon) d^{\gamma} + \C_\epsilon \chi_\Omega.
\]

We can apply the Perron method from Section \ref{secper} to conclude the existence of a solution to the problem \eqref{eq} satisfying the desired rate near the boundary: if $p < p_0$, then we can take $\gamma > 0$ in the above analysis.

Similar arguments hold for $t<0$.

\bigskip

\noindent
\textit{Case 2:} Let $\beta$ as in the statement of the theorem. Hence, $-1< \beta <s-1$ and for some $s-1 < \gamma < 0$ and $T, C_1 > 0$ to be fixed, denote
\[
U^- = Td^\beta - d^\gamma.
\]

We invoke Proposition~\ref{lema.cota1}  again, noticing that if~\eqref{Kestable} holds, and by choice of  $\beta$ and range of $\gamma$ we have $c(\beta)<0$ and $c^+(\gamma)<0$. Then using again \eqref{extre} we find

\begin{align*}
-\I U^-+|DU^-|^p 
\leq & -c(\beta) T d^{\beta - 2s} + O(d^{\beta - s}) + c^+(\gamma) d^{\gamma - 2s} \\
& + (T |\beta|)^p d^{(\beta - 1)p}|1  -  T^{-1} \beta^{-1} \gamma d^{\gamma - \beta}|^p \\
\leq & (-c(\beta) T  + (T |\beta|)^p) d^{\beta - 2s} + O(d^{\beta - s}) +  c^+(\gamma) d^{\gamma - 2s}.
\end{align*}

Then, taking $T = \bar T$ such that
$$
c(\beta) \bar T = \bar T^p |\beta|^p,
$$
 conclude that
\begin{equation*}
-\I U^- + |DU^-|^p \leq O(d^{\beta - s})+ c^+(\gamma) d^{\gamma - 2s},
\end{equation*}
in a neighborhood of $\partial \Omega$. Note that it is possible to fix $\gamma$ close enough to $s - 1$ in order to have $\gamma - 2s < \beta - s$, and then, for each point close to the boundary we have
\begin{align*}
-\I U^- + |DU^-|^p \leq  \frac{c^+(\gamma)}{2} d^{\gamma - 2s}.
\end{align*}

We conclude the existence of a subsolution in $\Omega$ in the same way as before.

For the supersolution, we consider
\begin{equation*}
U^+ = \bar T d^{\beta} + d^\gamma,
\end{equation*}
and proceed as before.

\noindent
\textit{Case 3:} The proof is similar to the previous case, but we provide the details for completeness. Since $p>p_2$ we have $s-1<\beta<0$.
For $T, C>0$ and $\gamma \in (0,2s-1)$ to be fixed, notice that $c(\gamma)<0$, since $2s-1<s$.
Now we define 
\[
U =-Td^{\beta} - Cd^\gamma.
\]

Then, writing as before $\tilde \I$ as the operator $-  I (- \cdot)$, and using Proposition~\ref{lema.cota1} and~\eqref{Kestable}, we have again using \eqref{extre}
\begin{align*}
-\I U  + |DU|^p & \leq T \tilde \I (d^\beta ) + C \M^+ (d^\gamma) + |T \beta d^{\beta - 1} + C\gamma d^{\gamma - 1}|^p \\
& \leq d^{\beta - 2s} (T \tilde c(\beta) + O(d^s)) + C c^+(\gamma) d^{\gamma - 2s} + |T \beta d^{\beta - 1} + C\gamma d^{\gamma - 1}|^p,
\end{align*}
where $\tilde c(\beta) < 0$  since $s-1<\beta<0$. At this point, we notice that since $\beta < 0$ and $C, \gamma > 0$, we have 
$$
|T \beta d^{\beta - 1} + C\gamma d^{\gamma - 1}| \leq T |\beta| d^{\beta - 1},
$$
for all $d = d(x)$ small enough. Using this, we get that
\begin{align*}
-\I U  + |DU|^p \leq d^{\beta - 2s} (T \tilde c(\beta) + O(d^s)) + C c^+(\gamma) d^{\gamma - 2s} + T^p |\beta|^p d^{(\beta - 1)p},
\end{align*}
and fixing $T = T^* := (-\tilde c(\beta) |\beta|^p)^{\frac{1}{p - 1}} > 0$, since $\beta - 2s = (\beta - 1)p$ we conclude that
\begin{align*}
-\I U  + |DU|^p \leq O(d^{\beta - s}) + Cc^+(\gamma) d^{\gamma - 2s} ,
\end{align*}
Since we also have $\beta - s>-1>\gamma-2s$.  fixing $C > 0$ large enough. Then, by similar arguments used before, we   can fix$C > 0$ large enough to find  that $U$ is a subsolution.

Now, for $C > 0$ and $\gamma > 0$ to be fixed, consider the function $V = -T^* d^\beta+ C d^\gamma$. Notice that $U \leq V$.  By a similar computation as above
we get that $V$ is a supersolution for the problem.

\end{proof}

\begin{remark}\label{rmk1}
Some remarks concerning the asymptotics as $s \to 1$ for $p \in (1,2)$ fixed. By the construction of the barriers in Theorem~\ref{teofbdd}, we see that the one-parameter solutions, and the negative scale solutions (Cases 1 and 3) shall always converge to the unique, bounded solution to the problem~\eqref{eqLLloc} with $u = 0$ on $\partial \Omega$. On the other hand, positive scale solutions (Case 2) converge to the unique large solution to~\eqref{eqLLloc}. This is a consequence of well-known stability results of viscosity solutions, the estimates for each of the solutions in Cases 1, 2 and 3, and the uniqueness of the limit equation.

Concerning ``critical" cases, we do not know if there exists blow-up solutions for the case $p=p_2$ (and $s$ fixed). In fact, the barriers constructed provide estimates that imply that the solutions found for $p < p_2$ and $p > p_2$ tend to the bounded solution of the limit problem as $p \to p_2^\pm$. For the case $p = 2s$ it was neither possible to construct large solutions by approximation as $p \to 2s^-$ ($s$ fixed). The local setting suggest to search for large solutions with logarithmic profile, but we leave that analysis for a future work. 
%
%
%
\end{remark}

\section{Extensions}\label{secext}
In this section we provide a discussion about possible extensions of Theorem~\ref{teofbdd} for more general operators and data.

First recall that the estimates found in Theorem~\ref{teofbdd} are crucial in order to prove Theorem \ref{teofbdd}. In order to obtain a similar result than Theorem~\ref{teofbdd} for general kernels, it suffices that the family of kernels $K$ satisfies the requirement
\begin{equation}\label{casic}
c(d(x), \tau) = c(\tau) + O(d(x)^\alpha),
\end{equation}
for some $\alpha > 0$ and some $c: (-1, 2s) \to \R$ that is independent of $x$.

For example, this condition holds when $K = K_a$ has the form
\begin{equation}\label{Ka}
K_a(z) = \frac{a(z)}{|z|^{N + 2s}}, \quad z \neq 0,
\end{equation}
where $a: \R^N \to \R$ is a nonnegative, measurable function with a uniform modulus of continuity at $z = 0$. More specifically, if we consider the class $a \in \mathcal A$ such that there exists $C, r > 0$ and $\alpha' > 0$ such that
\begin{equation}\label{KaHolder}
\sup_{a \in \mathcal A } \{ |a(z) - a(0) | \} \leq C|z|^{\alpha'}, \quad \mbox{for all} \ z \in B_r, 
\end{equation}
and use the class of kernels $\{ K_a \}_{a \in \mathcal A}$,
then expansion~\eqref{casic} holds for some $0 < \alpha < \alpha'$. 

In fact, the key step to arrive at~\eqref{casic} comes by the expansion, for $K = K_a$, given by
\begin{align*}
c_K(\rho, \tau) = & -C_{N, s}^{-1} a(0) (-\Delta)^s (x_N)_+^\tau (e_N) \\
& +  \mathrm{P.V.} \int_{\R^N} [(1 + z_N)_+^\tau - 1] \frac{(a(\rho z) - a(0))dz}{|z|^{N + 2s}}.
\end{align*}

The first term in the right-hand side is independent of $\rho$. The regularity assumption on $a$ allows us to control the second term in the righ-hand side as an error term of order $O(\rho^{\alpha})$ for some $\alpha < \alpha'$. For this, we divide the integrand as $\R^N = B_{R_\rho} \cup B_{R_\rho}^c$ for some $R_\rho$ large depending on $\rho$. The exterior part is controlled by the tails of the kernel and the boundedness of $a$ (ellipticity). The inner part requires the modulus of continuity $a$, for which we have the restriction that $R_\rho << \rho^{-1}$. See Lemma 3.1 in~\cite{DQT} for details.

{Concerning the exterior data, we reduce the problem to the homogeneous case by inserting it in the source term. namely, for $\varphi \in L^1_\omega(\Omega^c)$, denote
\begin{align*}
\tilde\varphi(x)=\left\{\begin{array}{ll}
0 &\text{for }x\in\Omega,\\
\varphi(x)&\text{for }x\in\Omega^c.
\end{array}\right.
\end{align*}

Observe that $\M^\pm \varphi(x)$ is well defined for $x \in \Omega$. Furthermore note that if $\varphi$ is smooth and extends smoothly to 0 in $\bar\Omega$, then $\M^\pm \varphi $ is also smooth and bounded in $\Omega$.

Now, if $u$ is a solution of
\begin{equation}\label{eq2}
\left \{ \begin{array}{rl} - \I (u) + |Du|^p + \lambda u = f \quad & \mbox{in} \ \Omega, \\
u  = \varphi \quad & \mbox{in} \ \Omega^c,  
\end{array} \right .
\end{equation}
then $\tilde u=u-\tilde\varphi$ is a solution of 
\begin{align}\label{eqgeneral0}
\left\{\begin{array}{rl}
\tilde{\mathcal I}(\tilde u)+|D \tilde u|^p+\lambda u=f,& \text{in }\Omega,\\
\tilde u=0,& \text{in }\Omega^c
\end{array}\right.
\end{align}
where $\tilde{\mathcal I}(v)=\mathcal I(v+\tilde \varphi)$ and satisfies
\[
\I v+\M^-\tilde\varphi \leq \tilde{\mathcal I}(v)\leq \I v+\M^+\tilde\varphi.
\]

%

These relations allow us to always consider 0 as the boundary data and impose the extra assumptions on the new right-hand side involving $f$ and the extremal operators evaluated at $\tilde \varphi$. Since our method is based on sub and supersolutions, we only need to obtain suitable inequalities, the ones imposing some restriction to use the method.  For example, as mentioned earlier, if $\tilde\varphi$ is smooth in an exterior neighborhood of $\partial\Omega$ then $\tilde f$ will be bounded as long as $f$ is bounded and hence all the results of Theorem \ref{teofbdd} applies. On the other hand, if we allow the right hand side to be unbounded, then that would require less regularity of the extension $\tilde\varphi$, for example if $\tilde\varphi$ is just bounded then $\M^\pm(\varphi)$ would be of order $d^{-2s}$. 

For our more general theorem below we introduce de following assumption 

 $(H1)$ $\quad  \mbox{there exists}  \quad \eta>0\quad  \mbox{such that}$
$$\quad\limsup\limits_{x\to \partial \Omega} d(x)^{s+1-\eta} f(x)< \infty ,\quad \mbox{ and }\quad \liminf\limits_{x\to \partial \Omega} d(x)^{s+1-\eta} f(x)>- \infty  ,$$
  
or
  
$(H2)$ $\quad  \mbox{there exists}  \quad \eta>0\quad  \mbox{such that}$
$$\quad\limsup\limits_{x\to \partial \Omega} d(x)^{2s-\eta} f(x)< \infty ,\quad \mbox{ and }\quad \liminf\limits_{x\to \partial \Omega} d(x)^{2s-\eta} f(x)>- \infty  ,$$

%
%

\begin{teo}\label{Teoprin}  Let $s \in (1/2,1)$, $0 < p < 2s$, $\Omega \subset \R^N$ be a bounded domain with $C^2$ boundary, $f\in C(\Omega)$.

Assume $\mathcal K$ as in Theorem~\ref{teofbdd}, or with the form~\eqref{Ka}-\eqref{KaHolder}. Let $\I$ be a nonlinear operator with the form~\eqref{opp}, and let $p_i$ be defined as in~\eqref{ppp}, $i=0,1,2$. Assume that $\lambda>-\lambda_0(\I)$

Then, we have the following existence results:

\medskip
\noindent
\textsl{1.- One parameter family of solutions (close to $s$-harmonic):} If $0 < p < p_2$ and $f$ satisfies $(H1)$ , there exists $\sigma > 0$ and a family of solutions $\{ u_t \}_{t \in \R, t \neq 0} \subset C^{\sigma}(\Omega)$ to \eqref{eq2}, such that for each $t$ we have
$$
d^{1 - s} u_t(x) - t = O(d^{\gamma}),  
$$
for some $\gamma > 0$ depending on $p$. In particular, if $t_1 < t_2$, then
$$
u_{t_1} < u_{t_2} \quad \mbox{in} \ \Omega.
$$

\medskip
\noindent
\textsl{2.- Positive scale solution:} If $p_1 < p < p_2$ and $f$ satisfies $(H1)$, then there exists $\sigma > 0$ and a constant $T > 0$ and a function $u \in C^{\sigma}(\Omega)$ solving~\eqref{eq2} such that
$$
d(x)^{-\beta}u(x) - T = O(d(x)^\gamma),
$$ 
for some $\gamma > 0$.
\medskip

\noindent 
\textsl{3.- Negative scale solution:} For $p_2 < p < 2s$ and $f$ satisfies $(H2)$, then there exist $\sigma > 0$, $T > 0$ and a solution $u \in C^{\sigma}(\Omega)$ of \eqref{eq2} such that
$$
d^{-\beta}(x)u(x) + T = O(d(x)^\gamma), 
$$
for some $\gamma > 0$.

\end{teo}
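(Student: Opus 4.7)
The plan is to follow the scheme of the proof of Theorem~\ref{teofbdd} almost verbatim, but making two structural modifications: first, reduce the non-homogeneous Dirichlet exterior condition to a homogeneous one by subtracting $\tilde\varphi$; second, verify that the barrier computations from Section~\ref{secteo} still leave enough ``excess'' on the left-hand side to absorb a right-hand side that is only controlled by (H1) or (H2), and that the extended kernel class still yields the needed expansion.

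\textbf{Step 1: reduction to zero exterior data.} Writing $\tilde u = u - \tilde\varphi$, problem \eqref{eq2} becomes \eqref{eqgeneral0} with operator $\tilde{\mathcal I}$, source $f$, and zero exterior data. By the extremal bound stated just before $(H1)$, $\tilde{\mathcal I}$ satisfies $\mathcal I v + \M^-\tilde\varphi \leq \tilde{\mathcal I} v \leq \mathcal I v + \M^+\tilde\varphi$, and the key point is that for $\varphi\in L^1_\omega(\Omega^c)$ the functions $\M^\pm \tilde\varphi$ are smooth in $\Omega$ with $|\M^\pm\tilde\varphi(x)|\leq C d(x)^{-2s}$. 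Thus every subsolution of the homogeneous problem with source $f + \M^-\tilde\varphi$ becomes a subsolution of \eqref{eqgeneral0}, and symmetrically with $\M^+$ for supersolutions. Note that the singularity introduced by $\M^\pm\tilde\varphi$ is precisely of order $d^{-2s}$, which is the allowance made in hypothesis $(H2)$; in $(H1)$ one tacitly assumes $\tilde\varphi$ extends with enough regularity so that $\M^\pm\tilde\varphi$ sits below the $d^{-(s+1-\eta)}$ threshold (or one absorbs it into the $d^{-(2s-\eta)}$ bound of $(H2)$).

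\textbf{Step 2: extension of Proposition~\ref{lema.cota1} to the H\"older kernel class.} For kernels of the form \eqref{Ka} with $a$ satisfying \eqref{KaHolder}, I would follow the plan outlined in the paragraph defining~\eqref{casic}: decompose
\begin{equation*}
c_{K_a}(\rho,\tau) = a(0)\, c_1(\tau) + \mathrm{P.V.}\!\int_{\R^N}[(1+z_N)_+^\tau-1]\frac{a(\rho z)-a(0)}{|z|^{N+2s}}\,dz,
\end{equation*}
and split the integral into $B_{R_\rho}\cup B_{R_\rho}^c$ with $R_\rho$ of order $\rho^{-\theta}$ for some $\theta<1$. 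The interior part is bounded by $C\rho^{\alpha'}R_\rho^{\alpha'-2s+\text{stuff}}$ using the modulus of continuity of $a$; the exterior part is controlled by the ellipticity-bounded tail of the kernel. Choosing $\theta$ correctly produces an error $O(\rho^\alpha)$ for some $\alpha\in(0,\alpha')$, which gives \eqref{casic} and hence the analogue of Proposition~\ref{lema.cota1} for this class, exactly as in Lemma~3.1 of \cite{DQT}.

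\textbf{Step 3: barrier construction with unbounded source.} The barriers $U_t^\pm = t d^{s-1}\pm\bar C_1 d^\gamma + C\chi_\Omega$ (Case~1), $Td^\beta \mp d^\gamma$ (Case~2), and $-Td^\beta\mp Cd^\gamma$ (Case~3) all produce, as computed in the proof of Theorem~\ref{teofbdd}, a dominant residual of the form $\kappa\, d^{\gamma-2s}$ near $\partial\Omega$, with $\kappa$ of definite sign. The question is whether we can still choose $\gamma$ so that this residual dominates the source. In Cases~1 and~2 I would use $(H1)$: the source is $O(d^{-(s+1-\eta)})$, so the barrier condition becomes $\gamma-2s\leq -(s+1-\eta)$, i.e.\ $\gamma\leq s-1+\eta$. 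In Case~1 the compatibility constraints required $s-1<\gamma<2s-(2-s)p$, which leaves a non-empty interval $(s-1,\,s-1+\eta]$ whenever $\eta>0$; a symmetric argument works for Case~2 using $\gamma$ slightly above $s-1$. In Case~3 the source is $O(d^{-(2s-\eta)})$ by $(H2)$, so the matching condition reads $\gamma\leq\eta$, and since the admissible range for $\gamma$ is $(0,2s-1)$ we can again pick $\gamma\in(0,\min\{\eta,2s-1\})$. In every case the same algebra that yielded the sign in Section~\ref{secteo} still gives a strict sub/supersolution once $\gamma$ is taken in the refined range.

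\textbf{Step 4: conclusion via Perron.} With strict sub- and supersolutions in hand satisfying the ordering and blow-up conditions of Proposition~\ref{perron-alternativo}, the existence of a viscosity solution of the transformed problem~\eqref{eqgeneral0}, with the desired boundary expansion $d^{s-1}u\to t$, $d^{-\beta}u\to\pm T$, follows exactly as in Theorem~\ref{teofbdd}. Adding back $\tilde\varphi$ produces the solution to \eqref{eq2}; since $\tilde\varphi\equiv 0$ in $\Omega$, the pointwise asymptotics near $\partial\Omega$ are unchanged. The $C^\sigma$ regularity comes from the interior H\"older estimates of~\cite{bci11} as in Proposition~\ref{perron-alternativo}. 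The main obstacle is the bookkeeping in Step~3: one must confirm that the admissible range of $\gamma$ produced by the barrier computation always contains a value compatible with the growth rate allowed in $(H1)$ or $(H2)$, and that the additional contribution of $\M^\pm\tilde\varphi$ (which is of order $d^{-2s}$) does not exhaust the margin. This is the place where the strict sign of $c^\pm(\gamma)$ from Proposition~\ref{lema.cota1} is used in an essential way.
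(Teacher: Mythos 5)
Your proposal is correct and takes essentially the same route as the paper: the paper's own proof of Theorem~\ref{Teoprin} is simply the observation you make in Step~3, namely that the barrier residual $d^{\gamma-2s}$ must dominate the allowed singularity of $f$, forcing $\gamma\leq s-1+\eta$ in Cases~1 and~2 and $\gamma\leq\eta$ in Case~3, and these constraints are compatible with the ranges of $\gamma$ produced in the proof of Theorem~\ref{teofbdd}; your Steps~2 and~4 reproduce the paper's discussion of the H\"older kernel class and the Perron scheme.

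One small slip in Step~1: you have the relationship between $(H1)$ and $(H2)$ inverted. Since $s<1$, the exponent $s+1-\eta$ in $(H1)$ exceeds $2s$ when $\eta$ is small, so $(H1)$ is the \emph{weaker} restriction on $f$; in particular $\M^\pm\tilde\varphi=O(d^{-2s})$ for bounded $\tilde\varphi$ already satisfies $(H1)$ (take any $\eta<1-s$, so that $d^{s+1-\eta}d^{-2s}=d^{1-s-\eta}\to 0$), and no extra regularity of the extension is tacitly needed. By contrast $d^{-2s}$ does \emph{not} fit the $(H2)$ threshold $d^{-(2s-\eta)}$, so the parenthetical ``absorb it into the $(H2)$ bound'' would fail. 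This is consistent with the paper's remark after Theorem~\ref{Teoprin}, and it does not affect your main argument in Step~3, which is exactly the content of the paper's proof.
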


\begin{remark}   Theorem  \ref{Teoprin}  can be used for equation like \eqref{eq2}  by the discussion before the Theorem  \ref{Teoprin} and replacing $f$ by $\tilde f$ .
In particular,  if $\tilde\varphi $  is just bounded then $\tilde f$ is of order $d^{-2s}$ as mentioned above and therefore satisfies  $(H1)$.  
\end{remark}

The proof of Theorem \ref{Teoprin} follows exactly as Theorem \ref{teofbdd} the point here is that we can choose $\gamma$ such that 
$(\gamma-2s)\leq -(s+1)+\eta$ in the case 1) and 2) and  such that $\gamma\leq \eta$  and then we can take $\gamma$ $(\gamma-2s)\leq -2s+\eta$  in case 3). 
Then using  (H1) (or (H2) for case 3)) we get our barrier in each of cases as in the proof of Theorem \ref{teofbdd} .

\bigskip

\noindent {\bf Acknowledgements.} 

G. D\'avila was partially supported by Fondecyt Grant 1190209

A. Q. was partially supported by Fondecyt Grant No. 1190282 and Programa Basal, CMM. U. de Chile.

E. T. was partially supported by Fondecyt Grant No. 1201897


\end{document}